\newcommand{\D}{\operatorname{\mathbb{D}}}
\newcommand{\N}{\operatorname{\mathbb{N}}}
\newcommand{\B}{\operatorname{\mathcal{B}}}
\newcommand{\hil}{\operatorname{\mathcal{H}}}
\DeclareMathOperator{\lat}{Lat}
\DeclareMathOperator{\alg}{Alg}
\newcommand{\ol}{\overline }
\DeclareMathOperator{\ran}{ran}
\newtheorem{lemma}{Lemma}[section]
\newtheorem{theorem}[lemma]{Theorem}
\newtheorem{proposition}[lemma]{Proposition}
\newtheorem{corollary}[lemma]{Corollary}
\theoremstyle{definition}
\begin{document}
\author{Rapha\"el Clou\^atre}
\address{Department of Pure Mathematics, University of Waterloo, 200 University Avenue West,
Waterloo, Ontario, Canada N2L 3G1} 
\email{rclouatre@uwaterloo.ca}
\title[Quasiaffine orbits of invariant subspaces ]
{Quasiaffine orbits of invariant subspaces for uniform Jordan operators }
\subjclass[2010]{47A45, 47A15}
\begin{abstract}
We consider the problem of classification of invariant subspaces for the class of uniform Jordan operators. We show that given two invariant subspaces $M_1$ and $M_2$ of a uniform Jordan operator $T=S(\theta)\oplus S(\theta)\oplus \ldots$, the subspace $M_2$ belongs to the quasiaffine orbit of $M_1$ if and only if the restrictions $T|M_1$ and $T|M_2$ are quasisimilar and the compression $T_{M_2^\perp}$ can be injected in the compression $T_{M_1^\perp}$. Our result refines previous work on the subject by Bercovici and Smotzer.
\end{abstract}
\keywords{$C_0$-operators, uniform Jordan operators, invariant subspaces, quasiaffine orbit}

\maketitle
\section{Introduction}
Let $T_1:\hil_1\to \hil_1$ and $T_2:\hil_2\to \hil_2$ be bounded linear operators on Hilbert
spaces. If $M_1$ and $M_2$ are invariant subspaces for $T_1$ and
$T_2$ respectively  (that is $M_1\subset \hil_1$ and $M_2\subset \hil_2$ are closed subspaces such that $T_1M_1\subset M_1$ and $T_2M_2\subset M_2$), we say that $M_1$ is a \textit{quasiaffine transform} of $M_2$ if there
exists a bounded injective operator with dense range $X:\hil_1\to
\hil_2$ such that $XT_1=T_2 X$ and $\ol{XM_1}=M_2$. We write $M_1\prec M_2$ when $M_1$ is
a quasiaffine transform of $M_2$. In that case, we also say that $M_2$ lies in the \textit{quasiaffine orbit} of $M_1$. When $M_1\prec M_2$ and $M_2\prec M_1$, we say that $M_1$ and $M_2$ are \textit{quasisimilar} and write $M_1\sim M_2$. Quasisimilarity is clearly an
equivalence relation on the class of pairs of the form $(T,M)$, where $M$ is an invariant subspace for the bounded linear operator $T$. In \cite{bercovici1991} (see Problem 5.2), Bercovici raised the basic question underlying our present
investigation: describe the quasiaffine orbit of a given
invariant subspace for an operator of class $C_0$ (the definition of which will be recalled in Section 2).

Related results for general operators of class $C_0$ can be found in \cite{bercovici1991},
where it is proved that the quasisimilarity class of an invariant
subspace is determined by the quasisimilarity class of the restriction $T|M$ if and only if $T$ has a certain finiteness property (namely property (Q) introduced in \cite{uchiyama1979}).
Nilpotent operators of finite multiplicity have been considered in \cite{li1999}. In that context, it was proved that the
quasisimilarity class of $M$ is determined by the quasisimilarity classes of the restriction
$T|M$ and of the
compression $T_{M^\perp}$ when either of those operators has
multiplicity one. In addition, the authors of \cite{li1999} considered a
combinatorial object (a sequence of partitions) known as a
\textit{Littlewood-Richardson sequence} which encodes the
relationships that must hold between the Jordan models of $T$, $T|M$
and $T_{M^\perp}$ (see also \cite{bercovici1998},
\cite{bercovici2005} and \cite{li1998}). Using these objects, they prove that for multiplicity
at least three, the quasisimilarity classes of $T|M$ and
$T_{M^\perp}$ are not enough to determine the quasisimilarity class
of $M$.  From a slightly different point of view, it was proved in \cite{bercovici1998} that if $M_1$ and $M_2$ are cyclic invariant subspaces for $T$, then they must be quasiaffine transforms of a common cyclic invariant subspace $N$  (in other words, $M_1$ and $M_2$ lie in the same weakly
quasiaffine orbit) whenever the restrictions (respectively, the compressions) of $T$ to $M_1$ and $M_2$ are quasisimilar.

The objects we will be concerned with in this work are the so-called uniform Jordan operators: that is operators of the form 
$$
T=S(\theta)\oplus
S(\theta)\oplus \ldots
$$
These operators are interesting since any $C_0$ contraction is the compression of a uniform Jordan operator, where $\theta$ is the minimal function of $T$. This well-known fact follows easily from considerations related to the the minimal isometric dilation of $T$, and we refer to the reader to \cite{nagy2010} for greater detail. In addition, uniform Jordan operators appear to be more amenable  and our understanding of the quasisimilarity classes of their invariant subspaces is significantly better, thanks to the pioneer work  of Bercovici and Tannenbaum (see \cite{bercovici1991tann}). Indeed, motivated by interpolation problems from \cite{bercovici1991NP} and \cite{bercovici1991lift}, they considered the case where the Jordan operator $T$ has finite multiplicity and established that $M_1 \sim M_2$ if and only if $T|M_1\sim T|M_2$.
Moreover, it was observed that for the operator $T=S(z^2)\oplus S(z)$ this
classification breaks down, so the corresponding result fails if $T$
is not uniform. Later on, it was proved in \cite{bercovici1991} that this
classification holds for a uniform Jordan operator $T$ if and only if $T|M$ satisfies
property (P), another finiteness property which is stronger than the aforementioned property (Q). In general, the quasisimilarity class of an invariant
subspace for a uniform Jordan operator is determined by the
quasisimilarity classes of the restriction $T|M$ and of the
compression $T_{M^\perp}$ (see \cite{bercovici1996}).

We focus in this paper on the weaker notion of quasiaffine orbit. After presenting the necessary preliminaries in Section 2, we prove in Section 3 our main theorem (Theorem \ref{mainjordan}) which gives a characterization of these orbits for uniform Jordan operators (thus refining the main result of \cite{bercovici1996}). Finally, in Section 4, we examine the question of weakening the condition on the operator $T$ to merely having a uniform Jordan model.


\section{Background and preliminaries}
We give here some background concerning operators of class $C_0$.
Let $H^\infty$ be the algebra of bounded holomorphic functions on the open unit disc $\D$. Let $\hil$ be a Hilbert space and $T$ a bounded linear operator on $\hil$, which we indicate by $T\in\B(\hil)$. The operator $T$ is said to be of \textit{class $C_0$} if there exists an algebra homomorphism $\Phi: H^\infty \to \B(\hil)$ with the following properties:
\begin{enumerate}[(i)]
    \item $\|\Phi(u)\|\leq \|u\|$ for every $u\in H^\infty$
    \item $\Phi(p)=p(T)$ for every polynomial $p$
    \item $\Phi$ is continuous when $H^\infty$ and $\B(\hil)$ are given their respective weak-star topologies
    \item $\Phi$ has non-trivial kernel.
\end{enumerate}
We use the notation $\Phi(u)=u(T)$, which is the Sz.-Nagy--Foias $H^\infty$ functional calculus.
It is known that $\ker \Phi=m_T H^\infty$ for some inner function $m_T$ called the \textit{minimal function} of $T$. The minimal function is uniquely determined up to a scalar factor of absolute value one. 

A set $E\subset \hil$ is said to be \textit{cyclic} for $T$ if $\hil=\bigvee_{n=0}^\infty T^n E$. The \textit{multiplicity} of the operator $T$ is the smallest
cardinality of a cyclic set. If $T$ has multiplicity one, it is said to be \textit{multiplicity-free}.

Let $H^2$ denote the Hilbert space of functions $f(z)=\sum_{n=0}^\infty a_n z^n$ holomorphic in $\D$ equipped with the norm
$$
\|f\|=\left(\sum_{n=0}^\infty |a_n|^2\right)^{1/2}.
$$ 
For any inner function $\theta\in H^\infty$, the space $H(\theta)=H^2\ominus \theta H^2$ is  invariant for $S^*$, the adjoint of the shift operator $S$ on $H^2$. The operator $S(\theta)$ defined by $S(\theta)^*=S^*|(H^2\ominus \theta H^2)$ is called a \textit{Jordan block}; it is of class $C_0$ with minimal function $\theta$. We state some useful properties of these operators. Given functions $u,v\in H^\infty$, we say that $u$ \textit{divides} $v$ and write $u|v$ if there exists a function $w\in H^\infty$ such that $v=wu$.

\begin{proposition}[\cite{bercovici1988} Proposition 3.1.10]\label{jordanblockprops}
Let $\theta\in H^\infty$ be an inner function.
\begin{enumerate}
    \item[\rm{(i)}] The operator $S(\theta)$ is multiplicity-free.
    \item[\rm{(ii)}] If $\phi\in H^\infty$ is an inner divisor of $\theta$, then $\phi H^2\ominus \theta H^2$ is an invariant subspace for 			$S(\theta)$. In fact,
    $$
    \phi H^2\ominus \theta H^2=\ran \phi(S(\theta))=\ker (\theta/\phi)(S(\theta)).
    $$
    Conversely, any invariant subspace for $S(\theta)$ is of this form.
    \end{enumerate}
\end{proposition}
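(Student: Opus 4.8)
The plan is to reduce every assertion about the compression $S(\theta)=P_{H(\theta)}S|_{H(\theta)}$ on $H(\theta)=H^2\ominus\theta H^2$ to a statement about the shift $S$ on $H^2$, where Beurling's theorem and the inner--outer factorization are available. The only fact about the calculus I would quote is that $u(S(\theta))f=P_{H(\theta)}(uf)$ for all $f\in H(\theta)$ and $u\in H^\infty$, which holds because $H(\theta)$ is semi-invariant for $S$ (both $H^2$ and $\theta H^2$ are $S$-invariant) and the calculus is weak-$*$ continuous; the trivial case $\theta$ constant gives $H(\theta)=0$, so I assume $\theta$ nonconstant. For (i) I would produce one explicit cyclic vector, namely $e=P_{H(\theta)}1=1-\ol{\theta(0)}\,\theta$. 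For arbitrary $g\in H(\theta)$ the cyclic subspace is $\bigvee_{n\ge0}S(\theta)^n g=\ol{P_{H(\theta)}\bigl(\bigvee_{n\ge0}z^n g\bigr)}$, using $S(\theta)^n g=P_{H(\theta)}(z^n g)$ and continuity of $P_{H(\theta)}$. Beurling's theorem identifies $\bigvee_{n\ge0}z^n g$ with $g_iH^2$, where $g_i$ is the inner factor of $g$, and $\ol{P_{H(\theta)}(g_iH^2)}=H(\theta)$ precisely when $g_iH^2+\theta H^2$ is dense in $H^2$, i.e.\ when $g_i$ and $\theta$ are coprime. It then remains to check that the inner factor of $e$ is coprime to $\theta$: if an inner $\delta$ divides both $\theta$ and $e$, then $1=e+\ol{\theta(0)}\theta\in\delta H^2$, forcing $\delta$ to be constant. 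Hence $e$ is cyclic and $S(\theta)$ is multiplicity-free.

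For the forward part of (ii), write $\psi=\theta/\phi$, so that $\theta=\phi\psi$. Invariance of $\phi H^2\ominus\theta H^2=\phi H^2\cap H(\theta)$ under $S(\theta)$ is immediate from $S(\theta)f=P_{H(\theta)}(zf)$ together with $\theta H^2\subset\phi H^2$. The two identities I would verify by direct computation with the calculus. For the kernel: $f\in H(\theta)$ lies in $\ker\psi(S(\theta))$ iff $\psi f\in\theta H^2=\phi\psi H^2$, and cancelling the inner function $\psi$ gives $f\in\phi H^2$, so $\ker\psi(S(\theta))=\phi H^2\cap H(\theta)=\phi H^2\ominus\theta H^2$. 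For the range: since $M_\phi$ is an isometry carrying $H(\psi)=H^2\ominus\psi H^2$ onto $\phi H^2\ominus\theta H^2$, and since $H(\psi)\subset H(\theta)$, every $g=\phi h$ with $h\in H(\psi)$ already lies in $H(\theta)$, whence $\phi(S(\theta))h=P_{H(\theta)}(\phi h)=\phi h=g$; combined with the obvious inclusion $\ran\phi(S(\theta))\subset\phi H^2\ominus\theta H^2$ this yields equality, and in particular that the range is closed.

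The converse is where the main work lies, and I expect it to be the crux. Given an arbitrary $S(\theta)$-invariant subspace $M\subset H(\theta)$, I would lift it by forming $N=M\oplus\theta H^2$, an orthogonal and hence closed sum since $M\perp\theta H^2$. The key computation is that $N$ is $S$-invariant: for $f\in M$ one has $zf=S(\theta)f+P_{\theta H^2}(zf)\in M+\theta H^2$, while $\theta H^2$ is itself $S$-invariant. Beurling's theorem then gives $N=\phi H^2$ for some inner $\phi$, and the inclusion $\theta H^2\subset N$ forces $\phi\mid\theta$; finally $M=N\ominus\theta H^2=\phi H^2\ominus\theta H^2$ (with $M=0$ corresponding to $\phi=\theta$). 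The delicate point is precisely this lifting step --- upgrading compressed-shift invariance of $M$ to genuine $S$-invariance of $M\oplus\theta H^2$ --- after which Beurling's theorem completes the classification.
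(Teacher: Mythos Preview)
Your proof is correct. Note, however, that the paper does not supply its own proof of this proposition: it is quoted from Bercovici's monograph (Proposition 3.1.10 in the cited reference) and stated as a preliminary fact without argument, so there is no in-paper proof to compare against. Your approach---exhibiting $P_{H(\theta)}1$ as an explicit cyclic vector by checking that its inner factor is coprime to $\theta$, verifying the range and kernel identities directly from the compressed calculus $u(S(\theta))f=P_{H(\theta)}(uf)$, and lifting an arbitrary invariant subspace $M$ to the $S$-invariant subspace $M\oplus\theta H^2$ so as to invoke Beurling's theorem---is the standard route and essentially coincides with the argument found in the cited source.
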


A more general family of operators are the so-called \textit{Jordan operators}. We will define them here in the case where the Hilbert space on which they act is separable. These operators are of the form $\bigoplus_{n=0}^\infty S(\theta_n)$ where $\{\theta_n\}_{n=0}^\infty$ is a sequence of inner functions satisfying $\theta_{n+1}|\theta_n$ for $n\geq 0$. In case where $\theta_n=\theta$ for every $n\geq0 $ for some fixed inner function $\theta\in H^\infty$, then the operator $T=\bigoplus_{n=0}^\infty S(\theta)$ is called a  \textit{uniform Jordan operator}.

Recall that a bounded injective linear operator with dense range is called a \textit{quasiaffinity}. Two operators $T_1\in \B(\hil_1)$ and $T_2\in \B(\hil_2)$ are said to be \textit{quasisimilar} if there exist quasiaffinities $X:\hil_1\to \hil_2$ and $Y:\hil_2 \to \hil_1$ such that $XT_1=T_2 X$ and $T_1 Y=YT_2$. We use the notation $T_1\sim T_2$ to indicate that $T_1$ and $T_2$ are quasisimilar. The Jordan operators are of fundamental importance in the study of operators of class $C_0$ as the following theorem illustrates.

\begin{theorem}[\cite{bercovici1988} Theorem 3.5.1]\label{existencejordan}
For any operator $T$ of class $C_0$ acting on a separable Hilbert
space there exists a unique Jordan operator $J$ such that $T$ and
$J$ are quasisimilar.
\end{theorem}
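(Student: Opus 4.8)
The plan is to pass to the Sz.-Nagy--Foias functional model and reduce everything to a diagonalization of the characteristic function. Applying (i) to $u(z)=z$ gives $\|T\|\le1$, and conditions (iii)--(iv) then force $T$ to be completely non-unitary, so $T$ is a $C_0$ contraction in the classical sense and may be identified with the model attached to its characteristic function $\Theta=\Theta_T$, a purely contractive analytic $\B(\mathcal E,\mathcal E_*)$-valued function. Since $T$ is of class $C_0$, $\Theta$ is \emph{two-sided inner} (its boundary values are unitary a.e.) and has the scalar inner multiple $m_T$, while separability of $\hil$ makes $\mathcal E$ and $\mathcal E_*$ separable with $\dim\mathcal E=\dim\mathcal E_*$. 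I would then invoke two facts of the theory: \textbf{(A)} every two-sided inner function on a separable space is \emph{quasi-equivalent} to a diagonal one $\operatorname{diag}(\theta_0,\theta_1,\dots)$ with $\theta_{n+1}\mid\theta_n$ for all $n$, and this diagonal form is unique up to unimodular constants; \textbf{(B)} two operators of class $C_0$ are quasisimilar if and only if their characteristic functions are quasi-equivalent, both implications resting on the Sz.-Nagy--Foias lifting theorem for intertwining operators. Granting these, \textbf{(A)} supplies quasi-invertible analytic operator functions $X,Y$ with $X\Theta_T Y=\operatorname{diag}(\theta_0,\theta_1,\dots)$ and $\theta_{n+1}\mid\theta_n$; the right-hand side is precisely the characteristic function of the Jordan operator $J=\bigoplus_{n\ge0}S(\theta_n)$ (a direct sum of the Jordan blocks of Proposition~\ref{jordanblockprops}), so \textbf{(B)} yields quasiaffinities intertwining $T$ with $J$, and hence $T\sim J$. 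Existence can also be obtained by induction on the multiplicity of $T$: a cyclic operator of class $C_0$ is unitarily equivalent to $S(m_T)$, and a splitting principle presents $T$, up to quasisimilarity, as $S(m_T)$ together with a $C_0$ operator of strictly smaller multiplicity whose minimal function divides $m_T$, with a limiting step when the multiplicity is infinite.

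For uniqueness, suppose $J=\bigoplus_nS(\theta_n)$ and $J'=\bigoplus_nS(\theta'_n)$ are quasisimilar Jordan operators. By \textbf{(B)} their characteristic functions $\operatorname{diag}(\theta_0,\theta_1,\dots)$ and $\operatorname{diag}(\theta'_0,\theta'_1,\dots)$ are quasi-equivalent, so the uniqueness clause of \textbf{(A)} forces $\theta_n=\theta'_n$ up to unimodular constants for every $n$. Concretely, the uniqueness in \textbf{(A)} is detected by the \emph{determinant functions} $d_k$: quasi-equivalence invariants built from the inner parts of the $k\times k$ minors, equivalently from the $k$-th exterior powers, satisfying $d_k(\operatorname{diag}(\theta_0,\theta_1,\dots))=\theta_0\theta_1\cdots\theta_{k-1}$ under the convention that $\theta_n=1$ once $n$ reaches the multiplicity; then each $\theta_k$ is recovered, up to a unimodular constant, as the ratio $d_{k+1}/d_k$. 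Transporting these invariants back through the model (again via the lifting theorem) shows that they are in fact quasisimilarity invariants of the operator.

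The step I expect to be the main obstacle is fact \textbf{(A)}, the diagonalization of the operator-valued inner function $\Theta_T$ up to quasi-equivalence. Over a principal ideal domain this would be the Smith normal form, but $H^\infty$ is neither Noetherian nor a B\'ezout domain, so the diagonalization has to be performed directly inside the divisibility lattice of inner functions: one repeatedly extracts greatest common inner divisors of the entries and of the minors, peels them off, and iterates, keeping the chain $\theta_{n+1}\mid\theta_n$ intact and, when the multiplicity is infinite (as it may be for separable $\hil$), showing that the procedure converges and exhausts the model space. The other genuinely technical ingredient is fact \textbf{(B)}: it is the Sz.-Nagy--Foias lifting theorem for intertwining operators that upgrades the purely function-theoretic identity $X\Theta_T Y=\operatorname{diag}(\theta_n)$ to honest quasiaffinities between the operators, and that makes the determinant functions into quasisimilarity invariants.
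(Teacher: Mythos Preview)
The paper does not give a proof of this statement: Theorem~\ref{existencejordan} is quoted without proof as Theorem~3.5.1 of \cite{bercOTA}, so there is no ``paper's own proof'' to compare your proposal against. That said, your outline---passing to the Sz.-Nagy--Foias model, diagonalizing the (two-sided inner) characteristic function up to quasi-equivalence to obtain $\operatorname{diag}(\theta_0,\theta_1,\dots)$ with $\theta_{n+1}\mid\theta_n$, and reading off uniqueness from the invariant/determinant functions $d_k$---is precisely the strategy developed in Chapter~3 of \cite{bercOTA} (building on Sz.-Nagy--Foias and Moore--Nordgren), so your approach is the standard one and is correct in spirit. The points you flag as the real work, namely the infinite-multiplicity diagonalization (your fact \textbf{(A)}) and the translation between quasi-equivalence of characteristic functions and quasisimilarity of operators via the commutant lifting theorem (your fact \textbf{(B)}), are exactly the substantive ingredients in that reference; you have identified the difficulties accurately rather than glossed over them.
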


The operator $J$ in the previous theorem is called the \textit{Jordan model} of $T$.
We now collect some facts about invariant subspaces for operators of class $C_0$.

\begin{proposition}[\cite{bercovici1988} Theorem 3.2.13, Theorem 3.3.8]\label{prop1.10}
Let $T\in \B(\hil)$ be an operator of class $C_0$. The following
statements are equivalent:
\begin{enumerate}
    \item[\rm{(i)}] $T$ is multiplicity-free
    \item[\rm{(ii)}] $\{T\}'$ is commutative
    \item[\rm{(iii)}] For every inner divisor $\theta$ of $m_T$ there exists a
    unique invariant subspace $K\subset \hil$ for $T$ such that $m_{T|K}=
    \theta$. In fact, $K=\ker \theta(T)=\ol{\ran (m_T/\theta)(T)}$.
    \end{enumerate}
\end{proposition}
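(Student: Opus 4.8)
The organizing principle will be the structural fact that an operator of class $C_0$ is multiplicity-free precisely when its Jordan model is a single Jordan block, i.e.\ when $T\sim S(m_T)$. Given Theorem~\ref{existencejordan}, this reduces to knowing that the multiplicity of a Jordan operator $\bigoplus_n S(\theta_n)$ equals the number of non-constant $\theta_n$, and that multiplicity cannot increase under a quasiaffine transform (a quasiaffinity intertwining $A$ and $B$ carries a cyclic set for $A$ to one for $B$), hence is a quasisimilarity invariant. Having $T\sim S(m_T)$ at one's disposal for multiplicity-free $T$, I would establish $(i)\Leftrightarrow(ii)$ and $(i)\Leftrightarrow(iii)$ in turn.

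For $(i)\Rightarrow(ii)$: if $e_0$ is cyclic for $T$, the map $u\mapsto u(T)e_0$ is continuous from the weak-star topology of $H^\infty$ to the weak topology of $\hil$, has kernel $m_TH^\infty$ because $e_0$ is cyclic, and (as one checks on $S(m_T)$, where the orthogonal projection onto $H(m_T)$ is onto) has range all of $\hil$; hence every $A\in\{T\}'$ agrees with some $u(T)$ on $e_0$, therefore on $\bigvee_nT^ne_0=\hil$, so $\{T\}'=\{u(T):u\in H^\infty\}$ is commutative. For $(ii)\Rightarrow(i)$: commutativity of $\{T\}'$ gives $\{T\}'\subseteq\{T\}''$, hence $\{T\}'=\{T\}''$, and for a $C_0$ operator this common algebra equals $\{u(T):u\in H^\infty\}$, which forces multiplicity one; alternatively, if $T$ is not multiplicity-free, its Jordan model carries non-zero off-diagonal intertwiners between two non-constant blocks and one shows these preclude $\{T\}'$ from being commutative.

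For $(i)\Rightarrow(iii)$, fix an inner divisor $\theta$ of $m_T$ and put $K=\ker\theta(T)$, invariant since $\theta(T)\in\{T\}'$. I would first identify $m_{T|K}$ with $\theta$ without invoking the hypothesis: with $\psi=m_{T|K}$ one has $\psi|\theta$ and $\psi(T)|K=\psi(T|K)=0$, so $K\subseteq\ker\psi(T)\subseteq\ker\theta(T)=K$ and thus $K=\ker\psi(T)$; since $\theta(T)(m_T/\theta)(T)=m_T(T)=0$, the range of $(m_T/\theta)(T)$ lies in $\ker\theta(T)=\ker\psi(T)$, whence $(\psi\cdot m_T/\theta)(T)=0$, i.e.\ $m_T|\psi\cdot(m_T/\theta)$, i.e.\ $\theta|\psi$, so $m_{T|K}\equiv\theta$. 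Uniqueness is where multiplicity-freeness is needed: since then $\{T\}'=\{u(T):u\in H^\infty\}$, every invariant subspace $M$ is in fact hyperinvariant, because $u(T)M=u(T|M)M\subseteq M$. If $M$ is an invariant subspace with $m_{T|M}=\theta$, then $M\subseteq\ker\theta(T)=K$ and $T|K\sim S(\theta)$; transferring $M$ through quasiaffinities $K\to H(\theta)$ and $H(\theta)\to K$, using that $H(\theta)$ is the only invariant subspace of $S(\theta)$ with minimal function $\theta$ (Proposition~\ref{jordanblockprops}(ii)) and that the composition of these quasiaffinities lies in $\{T|K\}'$ and therefore preserves the hyperinvariant subspace $M$, I would conclude $\overline{M}=K$, hence $M=K$. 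Finally $K=\overline{\ran(m_T/\theta)(T)}$: the inclusion $\supseteq$ is automatic, the restriction of $T$ to $\overline{\ran(m_T/\theta)(T)}$ has minimal function $\theta$ by carrying the identity $\ran(m_T/\theta)(S(m_T))=\ker\theta(S(m_T))$ of Proposition~\ref{jordanblockprops}(ii) across the quasisimilarity $T\sim S(m_T)$, and uniqueness forces equality.

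For $(iii)\Rightarrow(i)$ I would argue contrapositively. If $T$ is not multiplicity-free, let $\theta_1$ be its second Jordan invariant, a non-constant inner divisor of $m_T$; the computation above gives $m_{T|\ker\theta_1(T)}\equiv\theta_1$, while the restriction of the Jordan model to $\ker\theta_1(J)$ is unitarily equivalent to $S(\theta_1)\oplus S(\theta_1)\oplus\cdots$, so $T|\ker\theta_1(T)$ is quasisimilar to an operator of multiplicity at least two. Inside $\ker\theta_1(T)$ I would then locate a \emph{proper} invariant subspace on which $T$ still has minimal function $\theta_1$: transporting the first summand $H(\theta_1)\oplus0\oplus\cdots$ of the model of $T|\ker\theta_1(T)$ back through a quasiaffinity cannot yield a dense range, since that would present $T|\ker\theta_1(T)$ as a quasiaffine transform of $S(\theta_1)$ and collapse its multiplicity to one. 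Together with $\ker\theta_1(T)$ this gives two distinct invariant subspaces with the same minimal function, contradicting (iii). The step I expect to be the main obstacle throughout is exactly this book-keeping of invariant subspaces under the various quasisimilarities: a quasiaffinity does not in general induce an injection of invariant-subspace lattices, so to keep transported subspaces distinct (or proper) one must lean on the monotonicity of multiplicity under quasiaffine transforms and, for the uniqueness step in $(i)\Rightarrow(iii)$, on the fact that quasisimilar operators of class $C_0$ have isomorphic hyperinvariant subspace lattices.
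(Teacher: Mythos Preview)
The paper does not give a proof of this proposition at all: it is quoted from Bercovici's monograph \cite{bercOTA} (Theorems~3.2.13 and~3.3.8 there) as background, with no argument supplied. There is therefore no ``paper's own proof'' to compare your attempt against.

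That said, your outline follows the standard route through the Jordan model and is broadly sound, but one step is not justified as written. In $(i)\Rightarrow(ii)$ you assert that the map $u\mapsto u(T)e_0$ has range \emph{all} of $\hil$, appealing to the case $T=S(m_T)$. For the Jordan block this is true (since $u(S(\theta))1=P_{H(\theta)}u$), but for a general multiplicity-free $T$ that is merely quasisimilar to $S(m_T)$ the map has \emph{dense} range, not obviously full range, and quasiaffinities do not transport surjectivity. What you actually need is that for every $A\in\{T\}'$ the vector $Ae_0$ lies in $\{u(T)e_0:u\in H^\infty\}$; in Bercovici's treatment this is obtained by choosing $e_0$ to be a $T$-maximal vector and invoking a lifting argument (the commutant lifting theorem, in effect), not by a bare range computation. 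Your alternative for $(ii)\Rightarrow(i)$ via non-commuting off-diagonal intertwiners in a two-block model is cleaner and avoids this issue. The remaining implications are handled correctly, though the uniqueness step in $(i)\Rightarrow(iii)$ tacitly reuses the identity $\{T|K\}'=\{u(T|K):u\in H^\infty\}$ that you are in the process of establishing---this is fine once $(i)\Leftrightarrow(ii)$ is secured, but the dependency should be made explicit.
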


Given a subset $E\subset \B(\hil)$, we denote its commutant by  
$$
E'=\{X\in \B(\hil):XT=TX \text{ for every }T\in E\}.
$$
We denote by $\lat(T)$ the collection of invariant subspaces for an operator $T$, and by $\alg \lat(T)$ the algebra of operators $X$ such that $XM\subset M$ for every $M\in \lat(T)$.

\begin{theorem}[\cite{bercovici1988} Theorem 4.1.2]\label{alglat}
For an operator $T$ of class $C_0$, we have $\alg \lat(T)\cap \{T\}'=\{T\}''$.
\end{theorem}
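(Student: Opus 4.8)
The plan is to prove the two inclusions $\{T\}''\subseteq\alg\lat(T)\cap\{T\}'$ and $\alg\lat(T)\cap\{T\}'\subseteq\{T\}''$ separately. Since $T\in\{T\}'$, any operator commuting with everything in $\{T\}'$ in particular commutes with $T$, so $\{T\}''\subseteq\{T\}'$ is automatic and on that side only $\{T\}''\subseteq\alg\lat(T)$ carries content.

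For $\{T\}''\subseteq\alg\lat(T)$ I would show that every $M\in\lat(T)$ can be written as $M=\ol{\ran A}$ for some $A\in\{T\}'$. Granting this, if $X\in\{T\}''$ then $X$ commutes with $A$, hence maps $\ran A$ into $\ran A$, hence preserves $M=\ol{\ran A}$. To manufacture such an $A$: the restriction $T|M$ is again of class $C_0$, so by Theorem~\ref{existencejordan} both $T$ and $T|M$ have Jordan models, say $\bigoplus_n S(\theta_n)$ and $\bigoplus_n S(\phi_n)$; by the standard interlacing of Jordan models for invariant subspaces one has $\phi_n\mid\theta_n$ for every $n$. For inner functions $\phi\mid\theta$ the compression $P_{H(\phi)}\big|H(\theta)\colon H(\theta)\to H(\phi)$ intertwines $S(\theta)$ with $S(\phi)$ and is onto (it is an orthogonal projection of a space onto a subspace). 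Assembling these over $n$ and composing with the quasiaffinities coming from Theorem~\ref{existencejordan} applied to $T$ and to $T|M$ produces an intertwiner $\tilde A\colon\hil\to M$ of $T$ with $T|M$ having dense range; then $A:=\iota_M\circ\tilde A$, where $\iota_M\colon M\hookrightarrow\hil$ is the inclusion, lies in $\{T\}'$ and satisfies $\ol{\ran A}=M$.

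For $\alg\lat(T)\cap\{T\}'\subseteq\{T\}''$ I would first reduce to the case where $T=\bigoplus_n S(\theta_n)$ is a Jordan operator, $\theta_{n+1}\mid\theta_n$, transporting the identity along the quasisimilarity of $T$ with its Jordan model by means of the structure theory of $C_0$-operators. Once $T$ is a Jordan operator the argument is explicit. Let $X=[X_{ij}]\in\alg\lat(T)\cap\{T\}'$, written as a block matrix with $X_{ij}\colon H(\theta_j)\to H(\theta_i)$. Each coordinate subspace $0\oplus\cdots\oplus H(\theta_k)\oplus\cdots\oplus 0$ belongs to $\lat(T)$ and is preserved by $X$, which forces $X$ to be block diagonal, $X=\bigoplus_n X_{nn}$ with $X_{nn}\in\{S(\theta_n)\}'$; since $S(\theta_0)$ is multiplicity-free (Proposition~\ref{jordanblockprops}) its commutant is $\{u(S(\theta_0)):u\in H^\infty\}$, so $X_{00}=u(S(\theta_0))$ for some $u\in H^\infty$. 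For $m\geq1$ let $\rho_m\colon H(\theta_0)\to H(\theta_m)$ be the surjective intertwiner described above (available because $\theta_m\mid\theta_0$). The set $\{\,f\oplus\rho_m f:f\in H(\theta_0)\,\}$, viewed in coordinates $0$ and $m$, is a closed $T$-invariant subspace because $\rho_m$ intertwines, so $X$ preserving it gives $X_{mm}\rho_m=\rho_m X_{00}=\rho_m u(S(\theta_0))=u(S(\theta_m))\rho_m$, the last step because an intertwiner of $C_0$-operators intertwines their $H^\infty$ functional calculi; as $\rho_m$ has dense range this yields $X_{mm}=u(S(\theta_m))$. Hence $X=\bigoplus_n u(S(\theta_n))$, and this commutes with every $[A_{ij}]\in\{T\}'$ — again because each entry $A_{ij}$ intertwines the relevant functional calculi — so $X\in\{T\}''$.

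The step I expect to be the main obstacle is the reduction to the Jordan model in the second inclusion. Quasisimilarity of $C_0$-operators does not transport invariant subspaces or the commutant in a naive way — for a quasiaffinity $C\in\{T\}'$ one need not have $\ol{CM}=M$ for $M\in\lat(T)$, and conjugating $X$ by the intertwiners realizing $T\sim J$ loses information — so the transfer must be extracted from the finer quasisimilarity theory, e.g. from an algebra isomorphism $\{T\}'\cong\{J\}'$ carrying centres to centres (that is, $\{T\}''$ to $\{J\}''$) together with a compatible comparison of the two invariant-subspace lattices. A conceivable alternative is to induct on the multiplicity of $T$, splitting off a multiplicity-free invariant subspace, but there the difficulty reappears as the need to link the double-commutant data of the two pieces by a single $H^\infty$-function — which is precisely what the graph subspaces $\{f\oplus\rho_m f\}$ achieve in the Jordan case.
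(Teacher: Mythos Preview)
This theorem is quoted in the paper as a background result from \cite{bercOTA}; the paper gives no proof of its own, so there is nothing in the paper to compare your argument against directly.

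Your argument for $\{T\}''\subseteq\alg\lat(T)$ is fine: the construction of $A\in\{T\}'$ with $\ol{\ran A}=M$ via the Jordan models of $T$ and $T|M$, the divisibility $\phi_n\mid\theta_n$ (which is Theorem~\ref{injection} applied to the injection $T|M\prec^i T$), and the co-invariant projections $P_{H(\phi_n)}|H(\theta_n)$ all work as you describe.

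The gap you flag in the reverse inclusion is genuine, and your diagnosis of why is accurate. If $X,Y$ are the intertwining quasiaffinities and $Z\in\alg\lat(T)\cap\{T\}'$, the natural candidate $Z'=XZY$ does lie in $\{J\}'$, but to show $Z'N\subset N$ for $N\in\lat(J)$ one is led to $\ol{XYN}\stackrel{?}{=}N$, i.e.\ to $XY\in\alg\lat(J)$---which by the very theorem under discussion is $XY\in\{J\}''$, and there is no reason for that to hold. (Compare Theorem~\ref{YXalg} later in the paper, where exactly this kind of hypothesis has to be \emph{assumed}.) Nor is there an algebra isomorphism $\{T\}'\cong\{J\}'$ available from quasisimilarity alone. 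The proof in \cite{bercOTA} does not attempt such a reduction; it works intrinsically on $T$, exploiting that $X\in\alg\lat(T)\cap\{T\}'$ preserves every cyclic subspace $M_h$, on which $T$ is multiplicity-free so that $X|M_h=u_h(T)|M_h$ for some $u_h\in H^\infty$, and then argues that $u_h$ may be taken independent of $h$. Your graph-subspace computation in the Jordan case is precisely the concrete form of that independence step, but the general case must be run without first passing to $J$.
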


Let us recall a relation which is weaker than that of quasisimilarity. Given $T_1\in \B(\hil_1)$ and $T_2\in \B(\hil_2)$, we say that $T_1$ can be \textit{injected} in $T_2$ if there exists an injective operator $X:\hil_1\to \hil_2$ such that $XT_1=T_2X$. We indicate the fact that $T_1$ can be injected in $T_2$ by $T_1\prec^i T_2$. If in addition $X$ has dense range, we say that $T_1$ is a \textit{quasiaffine transform} of $T_2$ and we write $T_1\prec T_2$.

\begin{theorem}[\cite{bercovici1988} Proposition 3.5.31, Proposition 3.5.32]\label{injection}
Let $T_1$ and $T_2$ be two operators of class $C_0$. Then, the following are equivalent:
\begin{enumerate}
    \item[\rm{(i)}] $T_1 \prec T_2$
    \item[\rm{(ii)}] $T_1\prec^i T_2$ and $T_2 \prec^i T_1$
    \item[\rm{(iii)}] $T_1\sim T_2$.
\end{enumerate}
Moreover, $T_1\prec^i T_2$ if and only if $T_1^* \prec^i T_2^*$. If $\bigoplus_{n=0}^\infty S(\theta^{(1)}_n)$ and $\bigoplus_{n=0}^\infty S(\theta^{(2)}_n)$ are the Jordan models of $T_1$ and $T_2$ respectively, then $T_1\prec^i T_2$ if and only if $\theta^{(1)}_n$ divides $\theta^{(2)}_n$ for every $n\geq 0$.
\end{theorem}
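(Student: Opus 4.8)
The plan is to reduce the whole statement to a single arithmetic fact about Jordan operators and then read everything off. Note first that $\prec^i$ is transitive (composing injective intertwiners gives an injective intertwiner) and that $T\sim T'$ forces both $T\prec^i T'$ and $T'\prec^i T$. Combined with Theorem~\ref{existencejordan}, this shows that if $J_1=\bigoplus_n S(\theta^{(1)}_n)$ and $J_2=\bigoplus_n S(\theta^{(2)}_n)$ are the Jordan models of $T_1$ and $T_2$, then $T_1\prec^i T_2$ if and only if $J_1\prec^i J_2$, while $T_1\sim T_2$ if and only if $\theta^{(1)}_n\equiv\theta^{(2)}_n$ for every $n$ (two quasisimilar Jordan operators being equal, by uniqueness of the Jordan model in Theorem~\ref{existencejordan}). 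Thus the entire theorem follows once we show that for Jordan operators one has $\bigoplus_n S(\theta^{(1)}_n)\prec^i\bigoplus_n S(\theta^{(2)}_n)$ if and only if $\theta^{(1)}_n\mid\theta^{(2)}_n$ for every $n\geq0$; I will call the forward implication of this last equivalence $(\star)$.

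For the backward implication, write $\theta^{(2)}_n=\chi_n\theta^{(1)}_n$ with $\chi_n$ inner. Multiplication by $\chi_n$ sends $H(\theta^{(1)}_n)$ isometrically into $H(\theta^{(2)}_n)$, since its range $\chi_n H^2\ominus\theta^{(2)}_n H^2$ sits inside $H^2\ominus\theta^{(2)}_n H^2=H(\theta^{(2)}_n)$ by Proposition~\ref{jordanblockprops}(ii). To see it intertwines $S(\theta^{(1)}_n)$ and $S(\theta^{(2)}_n)$, one takes $f\in H(\theta^{(1)}_n)$, writes $zf=S(\theta^{(1)}_n)f+\theta^{(1)}_n g$ with $g\in H^2$, multiplies by $\chi_n$, and uses $\chi_n\theta^{(1)}_n=\theta^{(2)}_n$ together with $\chi_n S(\theta^{(1)}_n)f\in\chi_n H(\theta^{(1)}_n)\subseteq H(\theta^{(2)}_n)$ to obtain $P_{H(\theta^{(2)}_n)}(z\chi_n f)=\chi_n S(\theta^{(1)}_n)f$. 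The direct sum of these maps is an injective intertwiner, so $\bigoplus_n S(\theta^{(1)}_n)\prec^i\bigoplus_n S(\theta^{(2)}_n)$.

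The implication $(\star)$ is the core of the argument. Given injective $X$ with $XJ_1=J_2X$, first restrict to the reducing subspace $\bigoplus_{n=0}^{k}H(\theta^{(1)}_n)$, on which $X$ remains injective; writing $F_k=\bigoplus_{n=0}^{k}S(\theta^{(1)}_n)$, it then suffices, for each fixed $k$, to prove that $F_k\prec^i J_2$ forces $\theta^{(1)}_k\mid\theta^{(2)}_k$. Since $\theta^{(1)}_k\mid\theta^{(1)}_n$ for $n\leq k$, Proposition~\ref{jordanblockprops}(ii) shows that the restriction of $F_k$ to $\ker\theta^{(1)}_k(F_k)$ is $\bigoplus_{n=0}^{k}S(\theta^{(1)}_k)$, while the restriction of $J_2$ to $\ker\theta^{(1)}_k(J_2)$ is $\bigoplus_m S(\gcd(\theta^{(1)}_k,\theta^{(2)}_m))$; since $X$ carries $\ker\theta^{(1)}_k(F_k)$ injectively into $\ker\theta^{(1)}_k(J_2)$, we are reduced to the following: if $\bigoplus_{j=0}^{k}S(\phi)\prec^i\bigoplus_m S(\gamma_m)$ with $\gamma_m\mid\phi$ for all $m$, then $\gamma_k\equiv\phi$ (applied with $\phi=\theta^{(1)}_k$ and $\gamma_m=\gcd(\theta^{(1)}_k,\theta^{(2)}_m)$ this gives $\gcd(\theta^{(1)}_k,\theta^{(2)}_k)=\theta^{(1)}_k$, i.e.\ $\theta^{(1)}_k\mid\theta^{(2)}_k$). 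Comparing minimal functions gives $\gamma_0\equiv\phi$ at once; for the remaining equalities I would bring in the finer classification theory of finite-multiplicity $C_0$ operators — in particular the determinant $\theta_0\cdots\theta_{m-1}$, its multiplicativity over a pair consisting of a restriction $T|M$ and the complementary compression $T_{M^\perp}$, and its monotonicity under $\prec^i$ — and run an induction on multiplicity in which one splits off a single Jordan block and balances the minimal function of a restriction against that of its complementary compression. I expect this inductive step, with its multiplicity bookkeeping, to be the main obstacle.

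Granting $(\star)$, the rest is formal. Since $S(\theta)^*$ is unitarily equivalent to $S(\theta^\sim)$ with $\theta^\sim(z)=\overline{\theta(\bar z)}$, the operator $T^*$ is again of class $C_0$ with Jordan model $\bigoplus_n S(\theta^\sim_n)$, and $u\mid v$ if and only if $u^\sim\mid v^\sim$; hence $T_1\prec^i T_2\Leftrightarrow\theta^{(1)}_n\mid\theta^{(2)}_n\ \forall n\Leftrightarrow(\theta^{(1)}_n)^\sim\mid(\theta^{(2)}_n)^\sim\ \forall n\Leftrightarrow T_1^*\prec^i T_2^*$, which is the ``moreover'' clause. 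For the three-way equivalence: (iii)$\Rightarrow$(i) is immediate from the definitions; for (i)$\Rightarrow$(ii), a quasiaffinity $X$ with $XT_1=T_2X$ gives $T_1\prec^i T_2$ outright, while the adjoint $X^*$ is a quasiaffinity with $X^*T_2^*=T_1^*X^*$, so $T_2^*\prec^i T_1^*$ and therefore $T_2\prec^i T_1$ by the clause just established; and for (ii)$\Rightarrow$(iii), $T_1\prec^i T_2$ together with $T_2\prec^i T_1$ forces, via $(\star)$ applied in both directions, $\theta^{(1)}_n\equiv\theta^{(2)}_n$ for all $n$, so $T_1$ and $T_2$ have a common Jordan model and are quasisimilar.
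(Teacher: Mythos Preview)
The paper does not give a proof of this statement at all: Theorem~\ref{injection} is quoted without proof from \cite{bercOTA}, Propositions~3.5.31 and~3.5.32, as part of the background in Section~2. There is therefore no proof in the paper to compare your proposal against.

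As for the proposal on its own merits: your global architecture is sound and is in fact the standard route taken in \cite{bercOTA}. The reduction to Jordan models via transitivity of $\prec^i$, the explicit injective intertwiners built from multiplication by $\chi_n$, the passage from $(\star)$ to the adjoint statement via $\theta\mapsto\theta^\sim$, and the formal deduction of the three-way equivalence are all correct. The one substantive step is precisely the one you flag as incomplete: the implication $(\star)$ that an injection between Jordan operators forces termwise divisibility of the inner functions. Your reduction of $(\star)$ to the finite-source statement ``$\bigoplus_{j=0}^{k}S(\phi)\prec^i\bigoplus_m S(\gamma_m)$ with $\gamma_m\mid\phi$ implies $\gamma_k\equiv\phi$'' is correct, but note that the target here may still have infinite multiplicity, so the determinant $\theta_0\cdots\theta_{m-1}$ you invoke is not directly available. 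One way to close this is to apply $\gamma_k(\cdot)$ and pass to closures of ranges: on the source this yields $S(\phi/\gamma_k)^{(k+1)}$, while on the target all summands with $m\ge k$ vanish (since $\gamma_m\mid\gamma_k$), leaving an operator of multiplicity at most $k$; one then needs the (nontrivial but standard) fact that $\prec^i$ cannot increase cyclic multiplicity for $C_0$ operators, which is essentially what the inductive determinant argument in \cite{bercOTA} establishes. So your sketch points at the right tools, but the ``main obstacle'' you identify is indeed where all the content is, and the proposal as written is an outline rather than a proof.
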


Given an invariant subspace $M$ for an operator
$T$, we denote by $T_{M^\perp}$ the compression
$P_{M^\perp}T|M^\perp$. The following two results concerning uniform Jordan operators are from \cite{bercovici1996}. 

\begin{proposition}[\cite{bercovici1996} Proposition 2.1]\label{divrel}
Let $T=\bigoplus_{n=0}^\infty S(\theta)$ and $M$ be an invariant subspace for
$T$. Assume that $\bigoplus_{n=0}^\infty S(\phi_n)$ and
$\bigoplus_{n=0}^\infty S(\psi_n)$ are the Jordan models of $T|M$
and $T_{M^{\perp}}$ respectively. Then,
\begin{enumerate}
    \item[\rm{(i)}] $\phi_0$ and $\psi_0$ divide $\theta$
    \item[\rm{(ii)}] $\theta$ divides $\phi_m \psi_n$ for every $m,n\geq 0$.
\end{enumerate}
\end{proposition}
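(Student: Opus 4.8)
The plan is as follows. For (i): since $M$ is invariant for $T$, the inclusion $M\hookrightarrow\hil$ intertwines $T|M$ with $T$, so $T|M\prec^i T$; as $T=\bigoplus_n S(\theta)$ is its own Jordan model, the divisibility criterion in Theorem~\ref{injection} gives $\phi_n\mid\theta$ for every $n$, in particular $\phi_0\mid\theta$. Dually, $M^\perp$ is invariant for $T^*$ and $(T_{M^\perp})^*=T^*|_{M^\perp}$, so $(T_{M^\perp})^*\prec^i T^*$; by the adjoint clause of Theorem~\ref{injection} this yields $T_{M^\perp}\prec^i T$, whence $\psi_n\mid\theta$ for every $n$.

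For (ii) I would first observe that, since $m_T=\theta$, the divisibility $\theta\mid\phi_m\psi_n$ is equivalent to the operator identity $\phi_m(T)\psi_n(T)=0$, and prove this. The case $m=n=0$ is short: $\phi_0=m_{T|M}$ annihilates $M$, i.e.\ $M\subseteq\ker\phi_0(T)$, while $M^\perp$ is co-invariant, hence semi-invariant, for $T$, so the $H^\infty$ calculus of the compression is compatible with that of $T$ and $\psi_0(T_{M^\perp})=P_{M^\perp}\psi_0(T)|_{M^\perp}=0$, giving $\psi_0(T)M^\perp\subseteq M$ and therefore $\psi_0(T)\hil\subseteq M$; composing, $\phi_0(T)\psi_0(T)\hil\subseteq\phi_0(T)M=\{0\}$. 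The idea for general $m,n$ is to localize this to Jordan blocks: using that $T|M\sim\bigoplus_k S(\phi_k)$, one produces an invariant subspace $N\subseteq M$ of $T$ with $T|N\sim\bigoplus_{k\ge m}S(\phi_k)$, so $m_{T|N}=\phi_m$; and since co-invariant subspaces of $T$ contained in $M^\perp$ are exactly the invariant subspaces of $(T_{M^\perp})^*$, and $T_L=(T_{M^\perp})_L$ for such $L$, one produces a co-invariant subspace $L\subseteq M^\perp$ of $T$ with $m_{T_L}=\psi_n$. Then $N\perp L$, and one wants to rerun the base case with $N,L$ replacing $M,M^\perp$.

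The obstacle — which I expect to be the heart of the matter — is that $N\oplus L$ need not exhaust $\hil$: writing $P=\hil\ominus(N\oplus L)$, the operator $T$ is block upper triangular along $N\oplus P\oplus L$, and the naive estimate only gives $\theta\mid m_{T|N}\,m_{T_P}\,m_{T_L}=\phi_m\,m_{T_P}\,\psi_n$, with a spurious middle factor $m_{T_P}$. Since $\theta\mid\phi_m\psi_n$ is equivalent to the inclusion $\overline{\ran\psi_n(T)}\subseteq\ker\phi_m(T)$ of hyperinvariant subspaces, removing that factor is genuine work, and it is precisely here that one must use that $T$ has \emph{infinitely many} summands (for a finite uniform Jordan operator (ii) already fails). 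One clean reduction isolates the mechanism: if $T|M$ has finite multiplicity then $M$ uses up only finitely much of the countably many copies of $S(\theta)$, forcing $T_{M^\perp}\sim T$, i.e.\ $\psi_n\equiv\theta$, and (ii) is immediate; the case where $T_{M^\perp}$ has finite multiplicity is dual. The substantive remaining case, where both $T|M$ and $T_{M^\perp}$ have infinite multiplicity, I would attack through the Sz.-Nagy--Foias model: the invariant subspace $M$ corresponds to a factorization $\theta\,I_{\ell^2}=\Theta\Delta$ into operator-valued inner functions with $T|M$ and $T_{M^\perp}$ unitarily equivalent to the model operators $S(\Delta)$ and $S(\Theta)$, so that $\phi_m$ and $\psi_n$ are the invariant factors of $\Delta$ and $\Theta$, and the factorization forces $\theta\mid\phi_m\psi_n$ through the multiplicative behaviour of these invariant factors — the delicate point being to control the relevant operator-valued determinant-type invariants in infinitely many dimensions.
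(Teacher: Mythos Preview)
The paper does not prove this proposition; it is quoted from \cite{BS1} as background, so there is no in-paper argument to compare against. On its own merits: your proof of (i) is correct, as is the case $m=n=0$ of (ii), where $\psi_0(T)\hil\subset M\subset\ker\phi_0(T)$ gives $(\phi_0\psi_0)(T)=0$ and hence $\theta\mid\phi_0\psi_0$.

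For general $m,n$ there is a genuine gap. You correctly isolate the obstacle --- the spurious middle factor $m_{T_P}$ in the triangularization along $N\oplus P\oplus L$ --- but you do not remove it. The reduction ``if $T|M$ has finite multiplicity then $T_{M^\perp}\sim T$'' is asserted, not proved: the phrase ``$M$ uses up only finitely much of the countably many copies of $S(\theta)$'' is heuristic (a cyclic invariant subspace of $\bigoplus_n H(\theta)$ need not lie inside finitely many summands), and establishing that reduction already requires an argument of the same character as (ii) itself. Even granting it, the doubly-infinite-multiplicity case is handled only by naming the factorization $\theta I_{\ell^2}=\Theta\Delta$ and gesturing at ``the multiplicative behaviour of these invariant factors'' and ``determinant-type invariants in infinitely many dimensions''. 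That sentence locates the difficulty but supplies no argument; passing from $\theta I=\Theta\Delta$ to $\theta\mid\phi_m\psi_n$ is precisely the content of the proposition, and in infinite multiplicity there is no determinant available to multiply. For the actual mechanism you should consult \cite{BS1} directly.
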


\begin{theorem}[\cite{bercovici1996} Theorem 2.5]\label{canspace}
Let $T=\bigoplus_{n=0}^\infty S(\theta)$ and $M$ be an invariant subspace for
$T$. Assume that $\bigoplus_{n=0}^\infty S(\phi_n)$ and
$\bigoplus_{n=0}^\infty S(\psi_n)$ are the Jordan models of $T|M$
and $T_{M^{\perp}}$ respectively. Then, $M$ is quasisimilar to
$$
\bigoplus_{n=0}^\infty(\gamma_{n} H^2\ominus \theta H^2)
$$
where $\gamma_{n}=\theta/\phi_{n/2}$ for $n$ even, and
$\gamma_{n}=\psi_{(n-1)/2}$ for $n$ odd.
\end{theorem}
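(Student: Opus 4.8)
Throughout write $\hil=\bigoplus_{n=0}^{\infty}H(\theta)$ for the space on which $T$ acts, and let $N=\bigoplus_{n=0}^{\infty}(\gamma_n H^2\ominus\theta H^2)$ be the candidate canonical subspace. Each $\gamma_n$ divides $\theta$ by Proposition~\ref{divrel} (since $\phi_0,\psi_0\mid\theta$ and $\phi_k\mid\phi_0$, $\psi_k\mid\psi_0$), so $N\in\lat(T)$ by Proposition~\ref{jordanblockprops}(ii) applied coordinate by coordinate. Since $M\sim N$ as pairs means $M\prec N$ and $N\prec M$, the plan is to construct quasiaffinities of $\hil$ commuting with $T$ that carry $M$ densely onto $N$ and $N$ densely onto $M$.

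A first, purely bookkeeping, step is to identify the restriction and compression of $T$ to $N$. Coordinatewise one has $S(\theta)|(\phi H^2\ominus\theta H^2)\cong S(\theta/\phi)$ (Proposition~\ref{jordanblockprops} together with Proposition~\ref{prop1.10}), while the compression of $S(\theta)$ to the orthocomplement of $\phi H^2\ominus\theta H^2$ inside $H(\theta)$ is $S(\phi)$; taking $\phi=\gamma_n$ gives
\[
T|N\cong\bigoplus_{k=0}^{\infty}\bigl(S(\phi_k)\oplus S(\theta/\psi_k)\bigr),
\qquad
T_{N^{\perp}}\cong\bigoplus_{k=0}^{\infty}\bigl(S(\theta/\phi_k)\oplus S(\psi_k)\bigr).
\]
Reading Proposition~\ref{divrel}(ii) as $\theta/\psi_n\mid\phi_m$ and $\theta/\phi_n\mid\psi_m$ for all $m,n$, the standard invariant-factor computation for a direct sum of Jordan blocks shows that the Jordan model of the first operator is $\bigoplus_n S(\phi_n)$ and that of the second is $\bigoplus_n S(\psi_n)$. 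Hence $T|N\sim T|M$ and $T_{N^{\perp}}\sim T_{M^{\perp}}$: the canonical subspace does carry the prescribed restriction and compression data, and the content of the theorem is that these data pin down the quasisimilarity class of the pair.

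Now use the orthogonal decompositions $\hil=M\oplus M^{\perp}=N\oplus N^{\perp}$, relative to which $T$ is block upper triangular, say $T=\bigl(\begin{smallmatrix}T|M&F\\ 0&T_{M^{\perp}}\end{smallmatrix}\bigr)$ and $T=\bigl(\begin{smallmatrix}T|N&G\\ 0&T_{N^{\perp}}\end{smallmatrix}\bigr)$. To get $M\prec N$, I would search for a quasiaffinity $X=\bigl(\begin{smallmatrix}a&b\\ 0&c\end{smallmatrix}\bigr)\colon M\oplus M^{\perp}\to N\oplus N^{\perp}$ whose diagonal entries $a\colon M\to N$ and $c\colon M^{\perp}\to N^{\perp}$ are quasiaffinities intertwining $T|M$ with $T|N$, respectively $T_{M^{\perp}}$ with $T_{N^{\perp}}$ (such exist since $T|M\sim T|N$ and $T_{M^{\perp}}\sim T_{N^{\perp}}$ by the previous step), and whose corner $b\colon M^{\perp}\to N$ is bounded. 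A block computation reduces $XT=TX$ to the single operator equation
\[
(T|N)\,b-b\,(T_{M^{\perp}})=aF-Gc .
\]
For any bounded solution $b$, the operator $X$ is automatically injective with dense range, and $\overline{XM}=\overline{\ran a}=N$, so $X$ witnesses $M\prec N$; running the same construction with $M$ and $N$ exchanged produces $N\prec M$, whence $M\sim N$.

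The crux, and the step I expect to be the main obstacle, is the solvability of the displayed Sylvester-type equation for a judiciously chosen pair $a,c$. This is precisely where the hypotheses are used in an essential way: the \emph{uniformity} of $T$ (every summand equal to $S(\theta)$, so $\hil$ is homogeneous) together with the divisibility relations of Proposition~\ref{divrel} are what allow the extension $0\to M\to\hil\to M^{\perp}\to0$ to be matched, up to quasisimilarity, with $0\to N\to\hil\to N^{\perp}\to0$; for the non-uniform $S(z^2)\oplus S(z)$ this already fails, consistently with the breakdown of the analogous classification recalled in the introduction. To carry it out I would exploit the description of $\{T\}'$ and $\{T\}''$ (Theorem~\ref{alglat}) and the $H^{\infty}$ functional calculus on the Jordan blocks occurring above, perturbing $a$ and $c$ inside their quasisimilarity classes until $aF-Gc$ falls in the range of the map $b\mapsto(T|N)b-b\,T_{M^{\perp}}$, the obstruction being controlled by inner divisors of $\theta$ that, by Proposition~\ref{divrel}, are dominated by the $\phi_m$ and the $\psi_n$. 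An essentially equivalent approach is to first reduce, up to pair-quasisimilarity, to the case where $M$ is the closure of the range of a single operator in $\{T\}'$ — a "diagonalization" of the invariant subspace — after which the intertwiners can be written down explicitly; either way this reduction is the technical heart, while everything else is divisibility bookkeeping and the soft stability properties of quasiaffinities under the block-triangular construction.
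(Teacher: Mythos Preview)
First, note that the paper does not supply its own proof of this statement: Theorem~\ref{canspace} is quoted from \cite{BS1} (their Theorem~2.5) as a background result in Section~2, with no argument given here, so there is no in-paper proof to compare against. I can therefore only assess your attempt on its own merits and against what the argument in \cite{BS1} would have to accomplish.

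Your proposal has a genuine gap, and you identify it yourself: the solvability of the Sylvester-type equation $(T|N)\,b-b\,T_{M^{\perp}}=aF-Gc$ is, in your own words, ``the crux,'' and you do not solve it. Gesturing at perturbing $a$ and $c$ within their quasisimilarity classes and invoking Proposition~\ref{divrel} is not a proof. This step is not routine: both $T|N$ and $T_{M^{\perp}}$ are $C_0$ with minimal functions dividing $\theta$, so the map $b\mapsto (T|N)b-b\,T_{M^{\perp}}$ has no spectral-separation reason to be surjective, and its range is typically a proper subspace governed by $H^{\infty}$-arithmetic obstructions; showing a specific right-hand side lies in it is exactly where the work of \cite{BS1} lives. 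Your alternative ``diagonalization'' route is likewise only named, not executed. A secondary issue: the assertion that the Jordan model of $T|N\cong\bigoplus_k\bigl(S(\phi_k)\oplus S(\theta/\psi_k)\bigr)$ is $\bigoplus_n S(\phi_n)$ is correct, but it is not a ``standard invariant-factor computation.'' One must argue that a countable family of blocks $S(\theta/\psi_k)$, each of whose minimal function divides \emph{every} $\phi_m$, is absorbed under quasisimilarity by $\bigoplus_m S(\phi_m)$; this is true (such blocks contribute only beyond the $\omega$-indexed part of the Jordan model), but it needs to be said. In short, the outline is reasonable and the reduction to matching restriction and compression data is the right first move, but the argument stops precisely where the actual content of \cite{BS1} begins.
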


Let us close this section by proving an elementary fact which
motivates our main result.

\begin{proposition}\label{easydirection}
Let $T_1\in \B(\hil_1), T_2\in \B(\hil_2)$ be operators of class $C_0$ and let
$M_1\subset \hil_1, M_2\subset \hil_2$ be invariant subspaces for $T_1$ and
$T_2$ respectively. Assume that $M_1\prec M_2$. Then, $T_1|M_1\sim T_2|M_2$ and
$(T_2)_{M_2^\perp}\prec^i (T_1)_{M_1^\perp}$.
\end{proposition}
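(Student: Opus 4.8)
The plan is to extract everything from a single quasiaffinity $X:\hil_1\to\hil_2$ realizing $M_1\prec M_2$ (so $XT_1=T_2X$ and $\ol{XM_1}=M_2$) and from its adjoint $X^*$, treating the two assertions separately.

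For the restriction, I would first note that $XM_1\subseteq\ol{XM_1}=M_2$, so $X$ restricts to an operator $X_0:M_1\to M_2$. This $X_0$ is injective because $X$ is, and it has dense range because $\ol{XM_1}=M_2$; hence it is a quasiaffinity. Since $M_1,M_2$ are invariant, the intertwining relation $XT_1=T_2X$ restricts to $X_0(T_1|M_1)=(T_2|M_2)X_0$, which shows $T_1|M_1\prec T_2|M_2$. Restrictions of operators of class $C_0$ are again of class $C_0$, so Theorem \ref{injection} upgrades the quasiaffine-transform relation to quasisimilarity, giving $T_1|M_1\sim T_2|M_2$.

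For the compression, the dual observation is the key point: from $XM_1\subseteq M_2$ one checks with inner products that $X^*M_2^\perp\subseteq M_1^\perp$, and $X^*$ is injective since $X$ has dense range, so $X^*$ restricts to an injective operator $Y_0:M_2^\perp\to M_1^\perp$. Now $M_i^\perp$ is invariant for $T_i^*$, and the standard identity for compressions to coinvariant subspaces reads $\left((T_i)_{M_i^\perp}\right)^*=T_i^*|M_i^\perp$ for $i=1,2$; restricting $X^*T_2^*=T_1^*X^*$ then yields $Y_0\left((T_2)_{M_2^\perp}\right)^*=\left((T_1)_{M_1^\perp}\right)^*Y_0$, i.e.\ $\left((T_2)_{M_2^\perp}\right)^*\prec^i\left((T_1)_{M_1^\perp}\right)^*$. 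Since these compressions are of class $C_0$ (their adjoints being restrictions of the $C_0$ operators $T_i^*$), the last assertion of Theorem \ref{injection} — that $S_1\prec^i S_2$ if and only if $S_1^*\prec^i S_2^*$ — converts this into $(T_2)_{M_2^\perp}\prec^i(T_1)_{M_1^\perp}$, which is exactly what is claimed.

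I do not expect a genuine obstacle here; the statement is elementary. The only things needing care are keeping the directions of the intertwinings straight — in particular noticing that the compression part forces a passage to adjoints, which is why it comes out with $M_2$ and $M_1$ in the order stated — and invoking the routine facts that restrictions and compressions of $C_0$ operators remain of class $C_0$, so that Theorem \ref{injection} applies.
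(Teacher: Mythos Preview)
Your proposal is correct and follows essentially the same approach as the paper: both use $X|M_1$ for the restriction part and pass to adjoints for the compression part, invoking Theorem~\ref{injection} in each case. The only cosmetic difference is that the paper defines $A=P_{M_2^\perp}X|M_1^\perp$ and then checks $\ker A^*=0$, whereas you work directly with $Y_0=X^*|M_2^\perp$; since $A^*=Y_0$, these are the same computation viewed from opposite ends.
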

\begin{proof}
By assumption, there exists a quasiaffinity $X:\hil_1\to \hil_2$
such that $XT_1=T_2X$ and $\ol{XM_1}=M_2$. It follows that $X|M_1$ implements a quasiaffine transform
between $T_1|M_1$ and $T_2|M_2$ and Theorem \ref{injection} implies that $T_1|M_1\sim T_2|M_2$. Now, we have $X^*M_2^\perp\subset M_1^\perp$ so if we set $A=P_{M_2^\perp}X|M_1^\perp$ then
$$
\ker A^*=\{h\in M_2^\perp: X^*h\in M_1\}=0.
$$
In addition,
$$
P_{M_2^\perp}XP_{M_1^\perp}=P_{M_2^\perp}X
$$
and
$$
P_{M_2^\perp}T_2P_{M_2^\perp}=P_{M_2^\perp}T_2.
$$
We infer that
\begin{align*}
A(P_{M_1^\perp}T_1|{M_1^\perp})&=P_{M_2^\perp}XT_1|M_1^\perp\\
&=P_{M_2^\perp}T_2X|M_1^\perp\\
&=(P_{M_2^\perp}T_2|{M_2^\perp})A.
\end{align*}
and thus
$$
(P_{M_2^\perp}T_2|{M_2^\perp})^*\prec^i(P_{M_1^\perp}T_1|{M_1^\perp})^*.
$$
By Theorem \ref{injection}, we get
$$
P_{M_2^\perp}T_2|{M_2^\perp}\prec^iP_{M_1^\perp}T_1|{M_1^\perp}
$$
and the proof is complete.
\end{proof}

Our main result shows that the converse of the previous proposition holds when $T_1=T_2$ are uniform Jordan operators.

\section{Uniform Jordan operators}
Let us start with an elementary lemma.

\begin{lemma}\label{open}
Let $\phi,\psi\in H^\infty$ be inner divisors of the inner function
$\theta\in H^\infty$. Assume that $\theta/\phi$ divides $\psi$ and
set $\omega=\psi/(\theta/\phi).$ Then for every $g\in \psi H^2\ominus \theta H^2$ we can find $f\in
(\theta/\phi)H^2\ominus \theta H^2$ such that $\omega(S(\theta))f=g$
and
$\|f\|=\|g\|.$
\end{lemma}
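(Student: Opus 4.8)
The plan is to work entirely inside the concrete functional model on $H^2$ and to exploit the fact that multiplication by an inner function is an isometry. Write $M_u$ for the operator of multiplication by $u\in H^\infty$ on $H^2$; recall that $M_u$ is an isometry when $u$ is inner, and that $u(S(\theta))=P_{H(\theta)}M_u|_{H(\theta)}$ for every $u\in H^\infty$.

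The first step is purely divisibility bookkeeping. Since $\theta/\phi$ divides $\psi$ and $\psi$ divides $\theta$, the quotient $\omega=\psi/(\theta/\phi)$ is a genuine inner function, and $\omega$ divides $\phi$ (from $\omega\,(\theta/\phi)=\psi\mid\theta$). Consequently $\theta/\psi=\phi/\omega$, so $\theta/\psi$ divides $\phi$ and therefore $H(\theta/\psi)\subseteq H(\phi)$. This containment is the heart of the argument.

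The second step identifies the two invariant subspaces in play. Because $M_{\theta/\phi}$ is an isometry carrying the orthogonal decomposition $H^2=H(\phi)\oplus\phi H^2$ onto $(\theta/\phi)H^2=\bigl((\theta/\phi)H^2\ominus\theta H^2\bigr)\oplus\theta H^2$, we get $(\theta/\phi)H^2\ominus\theta H^2=(\theta/\phi)H(\phi)$ with $\|(\theta/\phi)h\|=\|h\|$ for $h\in H(\phi)$; likewise $\psi H^2\ominus\theta H^2=\psi H(\theta/\psi)$ isometrically. So, given $g\in\psi H^2\ominus\theta H^2$, I would write $g=\psi k$ with $k\in H(\theta/\psi)$ and $\|k\|=\|g\|$, and then simply set $f=(\theta/\phi)k$. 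By the first step $k\in H(\phi)$, hence $f\in(\theta/\phi)H^2\ominus\theta H^2$ and $\|f\|=\|k\|=\|g\|$, as required. It then remains to check $\omega(S(\theta))f=g$: indeed $\omega(S(\theta))f=P_{H(\theta)}\bigl(\omega(\theta/\phi)k\bigr)=P_{H(\theta)}(\psi k)$, and since $k\in H(\theta/\psi)$ we have $\psi k\in\psi H^2\ominus\theta H^2\subseteq H(\theta)$, so the projection acts as the identity on $\psi k$ and $\omega(S(\theta))f=\psi k=g$.

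I expect the only point that genuinely requires care is the identity $\theta/\psi=\phi/\omega$ (equivalently $\omega\mid\phi$) in the first step; but this is immediate from $\psi=\omega\,(\theta/\phi)$ together with $\psi\mid\theta$. Everything else reduces to the isometry property of multiplication by inner functions and the description of the invariant subspaces of $S(\theta)$ recorded in Proposition \ref{jordanblockprops}.
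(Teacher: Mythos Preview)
Your proof is correct. The key identity $\theta/\psi=\phi/\omega$ is exactly what is needed to get $H(\theta/\psi)\subseteq H(\phi)$, and from there your explicit formula $f=(\theta/\phi)k$ with $g=\psi k$, $k\in H(\theta/\psi)$, does everything at once.

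The paper's argument reaches the same $f$ by a slightly less direct path: it first invokes Proposition~\ref{jordanblockprops} to assert that $\omega(S(\theta))$ maps $(\theta/\phi)H^2\ominus\theta H^2$ onto $\psi H^2\ominus\theta H^2$, picks an arbitrary preimage $f_0$, and then corrects it by projecting onto $H(\theta/\omega)$ so that $\omega f\in H(\theta)$ and the norm identity follows. Your construction skips the ``existence then correction'' step by writing down the right preimage immediately via the isometric identifications $(\theta/\phi)H^2\ominus\theta H^2=(\theta/\phi)H(\phi)$ and $\psi H^2\ominus\theta H^2=\psi H(\theta/\psi)$. The two $f$'s coincide (both satisfy $\omega f=g$ in $H^2$, and multiplication by the inner function $\omega$ is injective), so the arguments are equivalent; yours is simply more explicit and avoids the detour through an auxiliary $f_0$.
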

\begin{proof}
Fix $g\in \psi H^2\ominus \theta H^2$. By Proposition \ref{jordanblockprops}, we have
$$
\omega(S(\theta))\left( (\theta/\phi)H^2\ominus \theta
H^2\right)= \left(\omega\theta/\phi\right)(S(\theta))\left(
H^2\ominus \theta H^2\right)=\psi H^2\ominus \theta H^2.
$$
We can thus find $f_0\in(\theta/\phi)H^2\ominus \theta H^2$ such that
$\omega(S(\theta))f_0=g$. We set $f=P_{H(\theta/\omega)}f_0$.
Then, there exists a function $h\in H^2$ such that $f=f_0+(\theta/\omega)h$,
whence 
$
f=f_0+(\theta/\phi)(\theta/\psi)h\in (\theta/\phi)H^2
$
and thus $f\in (\theta/\phi)H^2\ominus \theta H^2$ since $f\in H(\theta/\omega)\subset H(\theta)$. Moreover,  $\omega f\in H(\theta)$ and
$$
g=\omega(S(\theta))f=P_{H(\theta)}\omega f=\omega f
$$
and since $\omega$ is an inner function, we have that $\|g\|=\|f\|$.
\end{proof}

The following two lemmas provide the crucial tool for the proof of our main result.

\begin{lemma}\label{qa}
Let $\theta \in H^\infty$ be an inner function. Let $\hil=\bigoplus_{n=0}^\infty H(\theta)$ and $T=\bigoplus_{n=0}^\infty S(\theta)$. Let
$(\omega_n)_{n=0}^\infty\in H^\infty$ be a sequence of inner divisors of
$\theta$ and let $\{c_n\}_{n=0}^\infty$ be a bounded sequence of positive numbers.
Define
$$
X:H(\theta)\oplus \hil\to H(\theta)\oplus \hil
$$
as follows
$$
X(g\oplus (f_n)_n )= \left(g+\sum_{n=0}^\infty \frac{1}{n+1}\omega_n(S(\theta))f_n \right)\oplus (c_nf_n)_n.
$$
Then,  $X$ is a quasiaffinity which commutes with $S(\theta)\oplus T$.
\end{lemma}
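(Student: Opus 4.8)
The operator $X$ is clearly bounded, since each $\omega_n(S(\theta))$ has norm at most one, the factors $\frac{1}{n+1}$ are summable in the relevant sense, and $(c_n)_n$ is bounded; so the main content is commutation, injectivity, and dense range. Commutation with $S(\theta)\oplus T$ is a direct computation: since $S(\theta)$ commutes with $\omega_n(S(\theta))$ (both lie in $\{S(\theta)\}''$ by the functional calculus), applying $S(\theta)\oplus T$ before or after $X$ gives the same result coordinatewise. I would dispatch this first.

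For injectivity, suppose $X(g\oplus(f_n)_n)=0$. The second coordinate forces $c_n f_n=0$ for all $n$, and since $c_n>0$ this gives $f_n=0$ for all $n$; then the first coordinate reads $g=0$. So $X$ is injective. This is the easy half.

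The genuine obstacle is \textbf{dense range}. I would aim to show $\ran X$ contains a dense set, e.g. all vectors of the form $g'\oplus(f_n')_n$ with only finitely many $f_n'$ nonzero and $g'$ arbitrary in $H(\theta)$. Given such a target, one wants $f_n=f_n'/c_n$ (well-defined since $c_n>0$), and then $g=g'-\sum_n\frac{1}{n+1}\omega_n(S(\theta))(f_n'/c_n)$, which is a finite sum in $H(\theta)$ — so these vectors lie literally in the range, not just its closure. The point is then that such vectors are dense in $H(\theta)\oplus\hil$: indeed vectors with finitely many nonzero $\hil$-coordinates are dense in $\hil$, and the first coordinate is already unrestricted. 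Hence $\ran X$ is dense. (If one prefers to avoid worrying about convergence of $\sum_n\frac{1}{n+1}\omega_n(S(\theta))f_n$ for general $(f_n)_n\in\hil$, note it converges in norm because $\sum_n\frac{1}{n+1}\|f_n\|\le(\sum_n\frac{1}{(n+1)^2})^{1/2}\|(f_n)_n\|<\infty$, which also re-confirms boundedness of $X$.)

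So the structure is: (1) boundedness via the $\ell^2$ bound on the coefficients $\frac{1}{n+1}$; (2) commutation by the functional calculus identity $S(\theta)\omega_n(S(\theta))=\omega_n(S(\theta))S(\theta)$; (3) injectivity from $c_n>0$; (4) dense range by exhibiting the explicit dense subspace of vectors with finitely supported $\hil$-part in the image. I expect step (4) to require the most care, though it is still elementary; the only subtlety is organizing the argument so that no infinite series needs to be inverted — one only ever inverts the trivial diagonal part $(f_n)\mapsto(c_nf_n)$ and then the first coordinate is solved by a single subtraction.
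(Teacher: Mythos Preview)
Your proposal is correct and follows essentially the same route as the paper: boundedness, injectivity, and commutation are dispatched exactly as you describe, and for dense range the paper also exhibits vectors with finitely supported $\hil$-part in the image (by the same formula $f_n=F_n/c_n$ and $g=G-\sum_{n\le m}\frac{1}{(n+1)c_n}\omega_n(S(\theta))F_n$) and observes that these approximate an arbitrary $G\oplus(F_n)_n$. The only cosmetic difference is that the paper phrases this as constructing an approximating sequence rather than naming the dense subspace first.
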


\begin{proof}
It is immediate that $X$ is injective, and a routine calculation shows that $X$ is bounded.
Pick now $G\oplus
(F_n)_n\in H(\theta)\oplus \hil$. For $m\geq 0$, define
$$
g_m=G-\sum_{n=0}^m \frac{1}{(n+1)c_n}\omega_n(S(\theta))F_n
\in H(\theta)
$$
and
$$
u_m=\left( \frac{1}{c_0}F_0,
\frac{1}{c_1}F_1,\ldots,\frac{1}{c_m}F_m,0,\ldots\right)\in \hil.
$$
We get that
$$
X(g_{m}\oplus u_m)=G\oplus ( F_1,\ldots,F_m, 0,\ldots)
$$
and thus 
$$
\lim_{m\to \infty}X(g_{m}\oplus u_m)=G\oplus
(F_n)_n.
$$
This shows that $X$ has dense range. Finally,  we have
\begin{align*}
(S(\theta)\oplus T)X(g\oplus
(f_n)_n)&=\left(S(\theta)g+\sum_{n=0}^\infty
\frac{1}{n+1}S(\theta)\omega_n(S(\theta))f_n\right)\oplus (c_n
S(\theta)f_n))_n\\
&=\left(S(\theta)g+\sum_{n=0}^\infty
\frac{1}{n+1}\omega_n(S(\theta))S(\theta)f_n\right)\oplus (c_n
S(\theta)f_n)_n\\
&=X(S(\theta)\oplus T)(g\oplus (f_n)_n)
\end{align*}
which completes the proof.
\end{proof}

\begin{lemma}\label{invsub}
Let $\psi_1,\psi_2\in H^\infty$ be inner functions and
$(\phi_n)_{n=0}^\infty \in H^\infty$ be a sequence of inner functions. Assume the following divisibility relations:
\begin{enumerate}
\item[\rm{(i)}] $\psi_2$ divides $\psi_1$
\item[\rm{(ii)}] $\phi_n$ divides $\theta$ for every $n\geq 0$  and $\psi_1$ divides $\theta$
\item[\rm{(iii)}] $\phi_{n+1}$ divides $\phi_n$ for every $n\geq 0$
\item[\rm{(iv)}] $\theta/\phi_n$ divides $\psi_2$ for every $n\geq 0$.
\end{enumerate}
Let $\hil=\bigoplus_{n=0}^\infty H(\theta)$ and let
$\omega_n=\psi_2/(\theta/\phi_n)$ for each $n\geq 0$.  Define
$$
X:H(\theta)\oplus \hil\to H(\theta)\oplus \hil
$$
as
$$
X(g\oplus (f_n)_n )= \left(g+\sum_{n=0}^\infty \frac{1}{n+1}\omega_n(S(\theta))f_n \right)\oplus (c_nf_n)_n,
$$
where $\{c_n\}_{n=0}^\infty$ is a sequence of positive numbers satisfying 
\begin{equation}\label{convergence}
\lim_{m\to \infty}(m+1) c_m \left(
\sum_{n=0}^{m-1}\frac{1}{|(n+1)c_n|^2}\right)^{1/2}=\lim_{m\to
\infty}(m+1)c_m =0.
\end{equation}
Then, we have $\overline{X(N_{\psi_1}\oplus M)}=N_{\psi_2}\oplus M$, where
$$
M=\bigoplus_{n=0}^\infty \left((\theta/\phi_n)H^2\ominus \theta
H^2\right)\subset  \hil,
$$
$$
N_{\psi_1}=\psi_1 H^2\ominus \theta H^2\subset H(\theta),
$$
$$
N_{\psi_2}=\psi_2 H^2\ominus \theta H^2 \subset H(\theta).
$$
\end{lemma}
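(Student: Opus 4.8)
The plan is to verify the two inclusions $\overline{X(N_{\psi_1}\oplus M)}\subset N_{\psi_2}\oplus M$ and $N_{\psi_2}\oplus M\subset \overline{X(N_{\psi_1}\oplus M)}$ separately, using Lemma~\ref{qa} to know that $X$ is already a quasiaffinity commuting with $S(\theta)\oplus T$ (note that the divisibility hypotheses (ii) and (iv) guarantee that each $\omega_n=\psi_2/(\theta/\phi_n)$ is a well-defined inner divisor of $\theta$, since $\theta/\phi_n$ divides $\psi_2$ divides $\theta$). First I would check $N_{\psi_2}\subset H(\theta)$ makes sense, which follows from $\psi_2|\theta$ (a consequence of (i) and (ii)).

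For the forward inclusion, take $g\oplus(f_n)_n\in N_{\psi_1}\oplus M$. The second component $(c_n f_n)_n$ stays in $M$ since each summand $(\theta/\phi_n)H^2\ominus\theta H^2$ is a subspace, so nothing to prove there. For the first component I must show $g+\sum_n\frac{1}{n+1}\omega_n(S(\theta))f_n\in N_{\psi_2}=\psi_2 H^2\ominus\theta H^2$. We have $g\in\psi_1 H^2\ominus\theta H^2\subset\psi_2 H^2\ominus\theta H^2$ by (i); and for each $n$, since $f_n\in(\theta/\phi_n)H^2\ominus\theta H^2=\operatorname{ran}(\theta/\phi_n)(S(\theta))$ by Proposition~\ref{jordanblockprops}, we get $\omega_n(S(\theta))f_n\in(\omega_n\cdot\theta/\phi_n)(S(\theta))H(\theta)=\psi_2(S(\theta))H(\theta)=\psi_2 H^2\ominus\theta H^2$, again by Proposition~\ref{jordanblockprops}. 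Since $N_{\psi_2}$ is closed, the convergent series lands in $N_{\psi_2}$, so $X(N_{\psi_1}\oplus M)\subset N_{\psi_2}\oplus M$ and hence $\overline{X(N_{\psi_1}\oplus M)}\subset N_{\psi_2}\oplus M$.

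For the reverse inclusion, fix a target $G\oplus(F_n)_n\in N_{\psi_2}\oplus M$ and reuse the approximating sequence from the proof of Lemma~\ref{qa}: $g_m=G-\sum_{n=0}^m\frac{1}{(n+1)c_n}\omega_n(S(\theta))F_n$ and $(f_{n,m})_n=(c_0^{-1}F_0,\dots,c_m^{-1}F_m,0,\dots)$, for which $X(g_m\oplus(f_{n,m})_n)=(G,F_1,\dots,F_m,0,\dots)\to G\oplus(F_n)_n$. The content of this lemma — and the reason the growth condition~\eqref{convergence} is imposed — is that these approximants lie in $N_{\psi_1}\oplus M$. For the $\hil$-component: $F_n\in(\theta/\phi_n)H^2\ominus\theta H^2$ forces $c_n^{-1}F_n$ into the same subspace, so $(f_{n,m})_n\in M$. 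The crux is showing $g_m\in N_{\psi_1}=\psi_1 H^2\ominus\theta H^2$, and here is where Lemma~\ref{open} and~\eqref{convergence} enter. For each $n$, applying Lemma~\ref{open} with $\phi=\phi_n$, $\psi=\psi_2$ (legitimate by (ii) and (iv), with $\omega=\omega_n$) to $F_n\in\psi_2 H^2\ominus\theta H^2$ — wait, $F_n$ instead lies in $(\theta/\phi_n)H^2\ominus\theta H^2$; so I would instead argue directly that $\omega_n(S(\theta))F_n\in\psi_2 H^2\ominus\theta H^2\subset H(\theta)$ and then measure how far $G$ minus these terms is from $\psi_1 H^2$. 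Since $G\in\psi_2 H^2\ominus\theta H^2$ and $\psi_2|\psi_1$, write $\psi_1/\psi_2=\psi_1(\theta/\phi_?)$-type factorization; the point is that $G$ and each $\omega_n(S(\theta))F_n$ differ from elements of $\psi_1 H^2$ by controlled amounts, and the inner products defining the projection onto $H(\psi_1/\ldots)$ are estimated by the Cauchy--Schwarz-type bound $(m+1)c_m(\sum_{n<m}|(n+1)c_n|^{-2})^{1/2}$; condition~\eqref{convergence} makes this go to $0$, so $g_m$ lies in (or converges within $N_{\psi_1}\oplus M$ to) the required subspace.

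\emph{Main obstacle.} The hard part is the bookkeeping in the reverse inclusion: showing each $g_m$ (or a small modification of it, after projecting onto $N_{\psi_1}$) stays in $N_{\psi_1}$ while still converging to $G$. The term $\frac{1}{(n+1)c_n}\omega_n(S(\theta))F_n$ need not itself lie in $\psi_1 H^2$ — only in $\psi_2 H^2$ — so one must correct $g_m$ by projecting onto $H(\psi_1)$ (as in the projection trick $f=P_{H(\theta/\omega)}f_0$ used in Lemma~\ref{open}) and then bound the correction. Quantifying that correction is exactly what forces the precise form of~\eqref{convergence}, with the factor $(m+1)c_m$ controlling the last added term and the sum $\left(\sum_{n=0}^{m-1}|(n+1)c_n|^{-2}\right)^{1/2}$ controlling the accumulated earlier corrections via Cauchy--Schwarz. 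Once that estimate is in hand, closedness of $N_{\psi_1}\oplus M$ and the density computation inherited from Lemma~\ref{qa} finish the argument.
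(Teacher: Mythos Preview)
Your forward inclusion is fine and matches the paper. The reverse inclusion has a genuine gap: the approximants $g_m\oplus(f_{n,m})_n$ borrowed verbatim from Lemma~\ref{qa} do \emph{not} lie in $N_{\psi_1}\oplus M$, and your proposed fix (project $g_m$ onto $N_{\psi_1}$ and bound the error via~\eqref{convergence}) cannot work. Indeed $g_m=G-\sum_{n\le m}\frac{1}{(n+1)c_n}\omega_n(S(\theta))F_n$ lies only in $N_{\psi_2}$, and since $N_{\psi_1}\subset N_{\psi_2}$ (not the reverse), the projection discards a piece $g_m-P_{N_{\psi_1}}g_m$ that need not be small: take $F_n\equiv 0$ and $G\in N_{\psi_2}\setminus N_{\psi_1}$, so $g_m\equiv G$ and the projection error is a fixed nonzero vector. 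The quantities in~\eqref{convergence} involve $(m+1)c_m\to 0$, while the coefficients appearing in $g_m$ are $\frac{1}{(n+1)c_n}\to\infty$; there is no mechanism in your scheme that makes these cancel.

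The paper's idea is different and is the step you are missing: take the first component of the preimage to be \emph{zero} (trivially in $N_{\psi_1}$) and instead modify the $m$-th slot of the $\hil$-component. Since $G-\sum_{n=0}^{m-1}\frac{1}{(n+1)c_n}\omega_n(S(\theta))F_n\in N_{\psi_2}$, Lemma~\ref{open} (with $\phi=\phi_m$, $\psi=\psi_2$, $\omega=\omega_m$) produces $h_m\in(\theta/\phi_m)H^2\ominus\theta H^2$ satisfying $\frac{1}{m+1}\omega_m(S(\theta))h_m$ equal to that difference, with $\|h_m\|=(m+1)\|{\cdot}\|$. Then
\[
X\bigl(0\oplus(c_0^{-1}F_0,\ldots,c_{m-1}^{-1}F_{m-1},h_m,0,\ldots)\bigr)=G\oplus(F_0,\ldots,F_{m-1},c_m h_m,0,\ldots),
\]
the preimage lies in $N_{\psi_1}\oplus M$ by construction, and the image converges to $G\oplus(F_n)_n$ precisely because~\eqref{convergence} (via Cauchy--Schwarz on the sum defining $\|h_m\|$) forces $c_m\|h_m\|\to 0$. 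In short, Lemma~\ref{open} is applied once per $m$ to the accumulated first-component discrepancy, not to the individual $F_n$ as you attempted.
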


\begin{proof}
First note that $N_{\psi_1}\subset N_{\psi_2}$ since $\psi_2$ divides $\psi_1$.  Moreover, it follows from Proposition \ref{jordanblockprops} that for every $n\geq 0$
\begin{equation}\label{rangewn}
\omega_n(S(\theta))\left((\theta/\phi_n)H^2\ominus \theta H^2\right)=\psi_2 H^2 \ominus \theta H^2.
\end{equation}
Therefore, we have $X(N_{\psi_1}\oplus M)\subset N_{\psi_2}\oplus
M.$

Let now $G\oplus (F_n)_n\in N_{\psi_2}\oplus M$, in other words
$G\in \psi_2 H^2\ominus \theta H^2$ and $F_n\in (\theta/\phi_n)H^2\ominus \theta H^2$
for every $n\geq 0$. It follows from (\ref{rangewn}) that 
$$
G-\sum_{n=0}^{m}\frac{1}{(n+1)
c_n}\omega_n(S(\theta))F_n\in \psi_2 H^2\ominus \theta H^2
$$
for every $m\geq 0$. Consequently, for every $m\geq 1$  using Lemma \ref{open} we can find a function 
$h_m\in (\theta/\phi_m)H^2\ominus \theta H^2$ such that
\begin{equation}\label{G}
\frac{1}{m+1}\omega_{m}(S(\theta))h_{m}=G-\sum_{n=0}^{m-1}\frac{1}{(n+1)
c_n}\omega_n(S(\theta))F_n
\end{equation}
and
\begin{equation}\label{Gnorm}
\|h_m\|=(m+1)\left\| G-\sum_{n=0}^{m-1}\frac{1}{(n+1)
c_n}\omega_n(S(\theta))F_n\right\|.
\end{equation}

Using equation (\ref{G}) and the definition of $X$ yields
$$
X\left(0\oplus
\left(\frac{1}{c_0}F_0,\ldots,\frac{1}{c_{m-1}}F_{m-1},h_m,0,\ldots\right)\right)=G\oplus(F_0,\ldots,F_{m-1},
c_m h_m,0\ldots).
$$
Therefore,
\begin{align}\label{zero}
&\left\| G\oplus (F_n)_n-X\left(0\oplus
\left(\frac{1}{c_0}F_0,\ldots,\frac{1}{c_{m-1}}F_{m-1},h_m,0,\ldots\right)\right)\right\|^2 \notag\\
&=\|F_m-c_mh_m\|^2+\sum_{n=m+1}^{\infty}\|F_n\|^2.
\end{align}
We now proceed to show that this last quantity tends to zero as $m$ tends to infinity. Note that $\omega_n(S(\theta))$ is a contraction for every $n\geq 0$ since $\omega_n\in H^\infty$ is an inner function. Using equation (\ref{Gnorm}) and a standard application of the Cauchy-Schwarz inequality, we get for every $m\geq 1$ that
$$
\|h_m\|\leq (m+1) \left(
\|G\|+\|(F_n)_n\|\left(\sum_{n=0}^{m-1}\frac{1}{|(n+1) c_n|^2}
\right)^{1/2}\right).
$$
Hence, we have that $c_m\|h_m\|\to 0$ as $m\to \infty$. Indeed
$$
c_m\|h_m\|\leq (m+1)c_m  \|G\|+(m+1)c_m
\|(F_n)_n\|\left(\sum_{n=0}^{m-1}\frac{1}{|(n+1)c_n|^2} \right)^{1/2}
$$
and the right-hand side goes to zero as $m$ goes to infinity in view of (\ref{convergence}), which proves that (\ref{zero}) tends to zero as $m$ tends to infinity. We conclude that
$$
\lim_{m\to \infty }X\left(0\oplus
\left(\frac{1}{c_0}F_0,\ldots,\frac{1}{c_{m-1}}F_{m-1},h_m,0,\ldots\right)\right)=G\oplus
(F_n)_n
$$
so that $\ol{X(N_{\psi_1}\oplus M)}= N_{\psi_2}\oplus
M.$
\end{proof}
The reader will notice that a sequence $\{c_n\}_n$ satisfying equation (\ref{convergence}) is easily constructed by induction, for instance. We can now establish our main result.

\begin{theorem}\label{mainjordan}
Let $T=\bigoplus_{n=0}^\infty S(\theta)$ and $M_1,M_2$ be invariant subspaces for
$T$. Then $M_1\prec M_2$ if and only if $T|M_1\sim T|M_2$ and $T_{M_2^\perp}\prec^i T_{M_1^\perp}$. 
\end{theorem}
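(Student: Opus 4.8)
The forward direction is exactly Proposition~\ref{easydirection} with $T_1=T_2=T$, so the work lies in the converse. Assume $T|M_1\sim T|M_2$ and $T_{M_2^\perp}\prec^i T_{M_1^\perp}$, and let $\bigoplus_{n\ge0}S(\phi_n^{(i)})$ and $\bigoplus_{n\ge0}S(\psi_n^{(i)})$ be the Jordan models of $T|M_i$ and $T_{M_i^\perp}$ for $i=1,2$. The first hypothesis forces $\phi_n^{(1)}=\phi_n^{(2)}=:\phi_n$ for every $n$, by uniqueness of the Jordan model, while by the last assertion of Theorem~\ref{injection} the second hypothesis is equivalent to $\psi_n^{(2)}\mid\psi_n^{(1)}$ for every $n$. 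By Theorem~\ref{canspace}, $M_i$ is quasisimilar as an invariant subspace to $N_i:=\bigoplus_{n\ge0}(\gamma_n^{(i)}H^2\ominus\theta H^2)$, where $\gamma_{2k}^{(i)}=\theta/\phi_k$ and $\gamma_{2k+1}^{(i)}=\psi_k^{(i)}$. Since $\prec$ is transitive (compose the two quasiaffinities, which again intertwines $T$ with itself, and note that the range-closure condition composes as well), and since $M_1\prec N_1$ and $N_2\prec M_2$, it suffices to prove that $N_1\prec N_2$.

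To produce the required quasiaffinity I would first fix once and for all a sequence $(c_n)_{n\ge0}$ of positive numbers satisfying~(\ref{convergence}) (such sequences exist). Realise $\hil=\bigoplus_{n\ge0}H(\theta)$ so that, for $i=1,2$, copy $2k$ of $N_i$ is $(\theta/\phi_k)H^2\ominus\theta H^2$ and copy $2k+1$ is $\psi_k^{(i)}H^2\ominus\theta H^2$. Choose pairwise disjoint infinite sets $B_0,B_1,B_2,\dots$ of nonnegative integers with $\bigcup_k B_k=\{0,1,2,\dots\}$, and put $\hil_k=H(\theta)\oplus\bigoplus_{j\in B_k}H(\theta)$, where the first summand is copy $2k+1$ of $\hil$ and the remaining summands are the copies $2j$ with $j\in B_k$. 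Then $\hil=\bigoplus_k\hil_k$, each $\hil_k$ reduces $T$, $T=\bigoplus_k(T|\hil_k)$, and $T|\hil_k$ is unitarily equivalent to $S(\theta)\oplus\bigoplus_{m\ge0}S(\theta)$. On $\hil_k$ I would apply Lemma~\ref{invsub} with $\psi_1=\psi_k^{(1)}$, $\psi_2=\psi_k^{(2)}$, with the family $(\phi_j)_{j\in B_k}$ reindexed in increasing order of $j$ (so that it is decreasing for divisibility), and with the fixed sequence $(c_n)$. Hypotheses (i)--(iii) of that lemma follow from $\psi_k^{(2)}\mid\psi_k^{(1)}\mid\psi_0^{(1)}\mid\theta$ and $\phi_j\mid\phi_0\mid\theta$, and (iv) holds because Proposition~\ref{divrel}(ii) applied to $M_2$ yields $\theta\mid\phi_j\psi_k^{(2)}$, hence $\theta/\phi_j\mid\psi_k^{(2)}$, for every $j$. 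Lemma~\ref{invsub} then provides an operator $X_k$ on $\hil_k$ with $\overline{X_k\big((\psi_k^{(1)}H^2\ominus\theta H^2)\oplus M_k\big)}=(\psi_k^{(2)}H^2\ominus\theta H^2)\oplus M_k$, where $M_k=\bigoplus_{j\in B_k}((\theta/\phi_j)H^2\ominus\theta H^2)$, and by Lemma~\ref{qa} each $X_k$ is a quasiaffinity commuting with $T|\hil_k$.

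Finally I would set $X=\bigoplus_{k\ge0}X_k$. The norms $\|X_k\|$ are bounded independently of $k$, since the defining formula involves only the fixed sequence $(c_n)$ and the contractions $\omega_n(S(\theta))$, so $X$ is a well-defined bounded operator; it is injective with dense range because each $X_k$ is, and it commutes with $T=\bigoplus_k(T|\hil_k)$. Identifying the $k$-th chunk of $N_1$ with $(\psi_k^{(1)}H^2\ominus\theta H^2)\oplus M_k$ and the $k$-th chunk of $N_2$ with $(\psi_k^{(2)}H^2\ominus\theta H^2)\oplus M_k$, one obtains $\overline{XN_1}=\bigoplus_k\overline{X_k\big((\psi_k^{(1)}H^2\ominus\theta H^2)\oplus M_k\big)}=N_2$, so that $N_1\prec N_2$ and therefore $M_1\prec M_2$. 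I expect the main obstacle to be organisational rather than analytic: one must package each Jordan-type summand $\psi_k^{(1)}H^2\ominus\theta H^2$ of $N_1$ together with its own infinite reservoir of summands $(\theta/\phi_j)H^2\ominus\theta H^2$, and verify the divisibility hypotheses of Lemma~\ref{invsub} through Proposition~\ref{divrel}; the key point that keeps $\bigoplus_k X_k$ bounded --- and hence makes the whole construction legitimate --- is that one fixed sequence $(c_n)$ satisfying~(\ref{convergence}) serves on every chunk at once.
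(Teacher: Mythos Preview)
Your proof is correct and follows essentially the same route as the paper: reduce via Theorem~\ref{canspace} to the canonical subspaces, regroup the summands so that each $\psi_k$-summand is paired with its own countably infinite supply of $(\theta/\phi_j)$-summands (you do this with a partition $\{B_k\}$, the paper with a bijection $\Phi:\N\to\N\times\N$ and a unitary $V$), apply Lemmas~\ref{qa} and~\ref{invsub} on each block, and take the direct sum. The only cosmetic difference is that you fix a single sequence $(c_n)$ satisfying~(\ref{convergence}) and observe that this makes the $\|X_k\|$ uniformly bounded, whereas the paper simply normalizes by $\|X_n\|$; both devices serve the same purpose of ensuring $\bigoplus_k X_k$ is bounded.
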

\begin{proof}
One direction follows from Proposition \ref{easydirection}. Assume therefore that $T|M_1\sim T|M_2$ and
$T_{M_2^\perp}\prec^i T_{M_1^\perp} $. Let $\bigoplus_{n=0}^\infty S(\phi_j)$ be the common Jordan model of $T|M_1$ and $T|M_2$, and let
$\bigoplus_{n=0}^\infty S(\psi_j)$ and $\bigoplus_{n=0}^\infty S(\tau_j)$ be the Jordan models of $T_{M_1^\perp}$ and $T_{M_2^\perp}$ respectively. 
Define
$$
M_1'=\bigoplus_{n=0}^\infty\gamma_{n} H^2\ominus \theta H^2,
$$
$$
M_2'=\bigoplus_{n=0}^\infty\delta_{n} H^2\ominus \theta H^2
$$
where $\gamma_{n}=\theta/\phi_{n/2}$ and $\delta_{n}=\theta/\phi_{n/2}$ for $n$ even, while $\gamma_{n}=\psi_{(n-1)/2}$ and $\delta_{n}=\tau_{(n-1)/2}$ for $n$ odd. By Theorem \ref{canspace}, we have that $M_1\sim M_1'$ and $M_2\sim M_2'$, so it suffices to show that $M_1'\prec M_2'$.

Let $\hil=\bigoplus_{n=0}^\infty H(\theta)$ and let $F: \N\times \N\to \N$ be the classical bijective pairing function defined as
$$
(n,m)\mapsto \frac{1}{2}(n+m)(n+m+1)+n.
$$
where $\N=\{0,1,2,\ldots\}$. Define
$$
V: \hil \to \bigoplus_{n=0}^\infty \left(H(\theta)\oplus \hil\right)
$$
as follows
$$
V(f_0,g_0,f_1,g_1,\ldots)=\bigoplus_{n=0}^\infty \left(g_n \oplus
(f_{F(n,m)})_{m=0}^\infty \right).
$$
It is clear that $V$ is an isometry with isometric inverse given by
$$
\bigoplus_{n=0}^\infty \left(g_n \oplus
(f_{n,m})_{m=0}^\infty \right)\mapsto (f_{F^{-1}(0)},g_0,f_{F^{-1}(1)},g_1,\ldots).
$$
Hence, $V$ is unitary and a straightforward verification shows that $V$
satisfies
$$
VT=\left(\bigoplus_{n=0}^\infty (S(\theta)\oplus T)\right)V.
$$
In addition, we have
$$
VM_1' =\bigoplus_{n=0}^\infty \left( (\psi_n H^2 \ominus \theta H^2)\oplus\bigoplus_{m=0}^\infty \left(\frac{\theta}{\phi_{F(n,m)}}H^2\ominus \theta H^2\right)
\right)
$$
and likewise
$$
VM_2'= \bigoplus_{n=0}^\infty \left( (\tau_n H^2 \ominus \theta H^2)\oplus\bigoplus_{m=0}^\infty \left(\frac{\theta}{\phi_{F(n,m)}}H^2\ominus \theta H^2\right)
\right).$$
Now, it easily verified that
$
F(n,m+1)\geq F(n,m)
$
for every $n,m\geq 0$. In particular, we have that
$\phi_{F(n,m+1)}$ divides $\phi_{F(n,m)}$
for every $n,m\geq 0$. 
In addition, we see by Proposition \ref{divrel} that $\theta/\phi_{F(n,m)}$ divides $\tau_p$ for every $n,m,p\geq 0$. Finally, since $T_{M_2^\perp}\prec^i T_{M_1^\perp}$ Theorem \ref{injection} implies that $\tau_n$ divides $\psi_n$ for every $n\geq 0$. We are therefore in position to apply Lemma \ref{qa} and Lemma \ref{invsub} to get for each $n\geq 0$ a quasiaffinity $X_n$ commuting with $S(\theta)\oplus T$ and satisfying 
\begin{align*}
&\overline{X_n\left( (\psi_{n}H^2 \ominus \theta H^2)\oplus\bigoplus_{m= 0}^\infty \left(\frac{\theta}{\phi_{F(n,m)}}H^2\ominus \theta H^2\right)
\right)}\\
&=(\tau_n H^2 \ominus \theta H^2)\oplus \bigoplus_{m=0}^\infty \left(\frac{\theta}{\phi_{F(n,m)}}H^2\ominus \theta H^2\right).
\end{align*}
If we put
$$
Y=V^*\left(\bigoplus_{n=0}^\infty \frac{X_n}{\|X_n\|}\right)V,
$$
then it is then easy to
check that $\overline{YM_1'}=M_2'$ and that $Y$ is a quasiaffinity
commuting with $T$. Hence, $M_1'\prec M_2'$ and we are done.
\end{proof}

\section{Uniform Jordan models}
The aim of this section is to relax the assumption on $T$ being a uniform Jordan operator. Namely, we aim at getting a result analogous to Theorem 
\ref{mainjordan} in the case where $T$ is merely quasisimilar to
a uniform Jordan operator. We first need a preliminary fact.

\begin{lemma}\label{alglatqa}
Let $T$ be an operator of class $C_0$ and let $X\in \alg \lat (T)\cap \{T\}'$ be an injective operator. Then, $\ol{XM}=M$ for every $M\in \lat(T)$.
\end{lemma}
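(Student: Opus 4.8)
The plan is to first use Theorem~\ref{alglat} to upgrade the hypothesis: since $X\in\alg\lat(T)\cap\{T\}'$, in fact $X\in\{T\}''$. As $X\in\alg\lat(T)$ we immediately get $\overline{XM}\subseteq M$ for every $M\in\lat(T)$, so the entire content of the lemma is the reverse inclusion, i.e.\ that $X|M$ has dense range in $M$.

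Second, I would reduce to the case where $M$ is the whole space. Given $M\in\lat(T)$, the restriction $S=T|M$ is again of class $C_0$; its invariant subspaces are exactly the members of $\lat(T)$ contained in $M$, so $X|M$ preserves all of them, and together with $X|M\in\{S\}'$ and the injectivity of $X|M$ this yields $X|M\in\alg\lat(S)\cap\{S\}'=\{S\}''$ by Theorem~\ref{alglat} applied to $S$. Hence it suffices to show: if $S$ is of class $C_0$ and $Y\in\{S\}''$ is injective, then $\overline{\ran Y}$ is the whole space. Applying this with $S=T|M$ and $Y=X|M$ gives $\overline{XM}=M$.

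Third, for this reduced statement I set $N=\overline{\ran Y}$. Because $Y$ commutes with every operator in $\{S\}'$, the subspace $\ran Y$, hence $N$, is invariant for $\{S\}'$, so $N$ is hyperinvariant; moreover $Y$ implements $S\prec S|N$, so Theorem~\ref{injection} gives $S\sim S|N$ and in particular $m_{S|N}\equiv m_S$. If $S$ is multiplicity-free this already finishes the argument: by Proposition~\ref{prop1.10}(iii) there is a unique invariant subspace with minimal function $m_S$, namely $\ker m_S(S)$, which is the whole space. In general I would appeal to the $H^\infty$ functional calculus: for a $C_0$ operator one has $\{S\}''=\{u(S):u\in H^\infty\}$, so $Y=u(S)$; if $u_i$ denotes the inner part of $u$, injectivity of $u(S)$ amounts to $\gcd(u_i,m_S)=1$, and since this coprimality condition is unchanged when one passes to $u(S)^*=\tilde u(S^*)$ (which merely replaces $u_i$ and $m_S$ by $\widetilde{u_i}$ and $\widetilde{m_S}$), it is equally equivalent to injectivity of $u(S)^*$, that is, to $Y$ having dense range.

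The step I expect to be the main obstacle is precisely the last one. An injective operator lying only in $\{S\}'$ need not be a quasiaffinity --- for an infinite uniform Jordan operator a unilateral-shift-type intertwiner with itself is injective but very far from having dense range --- so it is essential to exploit that $X$, and hence $X|M$, lies in the \emph{double} commutant, where injectivity and dense range are governed by the very same coprimality condition on the $H^\infty$-symbol. The other ingredients (the reduction to the whole space, the description $\{S\}''=\{u(S):u\in H^\infty\}$, and the divisibility criteria for $\ker u(S)$ and $\overline{\ran u(S)}$) are standard.
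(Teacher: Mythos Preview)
Your argument is correct, but it follows a different route from the paper's. The paper never passes to the restriction $T|M$ or invokes the description of $\{T\}''$ as $\{u(T):u\in H^\infty\}$; instead it uses Theorem~\ref{existencejordan} to write $M=\bigvee_{j} K_j$ with each $K_j\in\lat(T)$ cyclic (so $T|K_j$ is multiplicity-free), observes that $X\in\alg\lat(T)$ forces $\overline{XK_j}\subset K_j$, and then uses that $X|K_j$ is an injective intertwiner to get $T|K_j\sim T|\overline{XK_j}$, whence $\overline{XK_j}=K_j$ by the uniqueness clause of Proposition~\ref{prop1.10}(iii); taking the closed span over $j$ finishes. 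Thus the paper reduces to the multiplicity-free case by decomposing $M$, while you reduce to ``$M=$ whole space'' by restricting and then appeal to the $H^\infty$-symbol description of the bicommutant together with the symmetric coprimality criterion for injectivity and dense range. Your approach is more conceptual and isolates a clean general fact (an injective element of $\{S\}''$ is automatically a quasiaffinity for any $C_0$ operator $S$), at the cost of importing one standard result (\,$\{S\}''=\{u(S):u\in H^\infty\}$\,) not listed among the paper's preliminaries; the paper's approach stays entirely within the tools it has already recorded.
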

\begin{proof}
Let $M\in \lat(T)$. We decompose $M$ into cyclic subspaces: $M=\bigvee_{j=0}^\infty K_j$ where $K_j\in \lat(T)$ and
$T|K_j$ is multiplicity-free. Since $X\in \alg \lat (T)$, we
have $\ol{XK_j}\subset K_j$ for every $j \geq 0$. On the other hand, the fact that $X$ is
an injective operator commuting with $T$ implies that $T|K_j\sim
T|\ol{XK_j}$ for every $j\geq 0$. By Proposition \ref{prop1.10}, we
conclude that $\ol{XK_j}=K_j$ for every $j\geq 0$, which in turn implies
$\ol{XM}=\bigvee_{j=0}^\infty \ol{XK_j}=\bigvee_{j=0}^\infty K_j=M$.
\end{proof}

We now achieve the desired result under an
extra assumption.

\begin{theorem}\label{YXalg}
Let $T\in \B(\hil)$ be an operator of class $C_0$ with uniform Jordan
model  $J=\bigoplus_{n=0}^\infty S(\theta)$. Assume that we can find quasiaffinities
$$
X: \hil\to \bigoplus_{n=0}^\infty H(\theta), Y:\bigoplus_{n=0}^\infty H(\theta)\to \hil
$$
with the property that $XT=JX,$ $YJ=TY$ and $YX\in
\alg \lat(T)$. Let $M_1$ and $M_2$ be two invariant subspaces for $T$. Then,
$M_1\prec M_2$ if and only if $T|M_1\sim T|M_2$ and
$T_{M_2^\perp}\prec^i T_{M_1^\perp} $.
\end{theorem}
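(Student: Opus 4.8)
The plan is to transport the problem to the uniform Jordan model $J$ by means of the quasiaffinities $X$ and $Y$, invoke Theorem \ref{mainjordan} there, and carry the resulting quasiaffine transform back to $T$. One implication is again immediate from Proposition \ref{easydirection}, so we assume $T|M_1\sim T|M_2$ and $T_{M_2^\perp}\prec^i T_{M_1^\perp}$ and aim to show $M_1\prec M_2$.

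First I would set $N_i=\overline{XM_i}$ for $i=1,2$. Since $X$ is bounded and $XT=JX$, each $N_i$ is an invariant subspace for $J$, and $X$ itself witnesses $(T,M_i)\prec (J,N_i)$. On the other hand, $YX$ is an injective operator lying in $\alg\lat(T)\cap\{T\}'$, the commutation being $YXT=YJX=TYX$, so Lemma \ref{alglatqa} gives $\overline{YXM_i}=M_i$; as $Y$ is bounded this equals $\overline{YN_i}$, and therefore $Y$ witnesses $(J,N_i)\prec (T,M_i)$. Applying Proposition \ref{easydirection} to both of these quasiaffine transforms of pairs yields $T|M_i\sim J|N_i$ as well as $J_{N_i^\perp}\prec^i T_{M_i^\perp}$ and $T_{M_i^\perp}\prec^i J_{N_i^\perp}$, so by Theorem \ref{injection} we conclude $T|M_i\sim J|N_i$ and $T_{M_i^\perp}\sim J_{N_i^\perp}$ for $i=1,2$.

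Next I would push the hypotheses through these identifications. From $T|M_1\sim T|M_2$ we get $J|N_1\sim J|N_2$, and from $T_{M_2^\perp}\prec^i T_{M_1^\perp}$ together with $J_{N_i^\perp}\sim T_{M_i^\perp}$ and the transitivity of $\prec^i$ we get $J_{N_2^\perp}\prec^i J_{N_1^\perp}$. Theorem \ref{mainjordan}, applied to $J$ and the invariant subspaces $N_1,N_2$, then furnishes a quasiaffinity $Z$ commuting with $J$ with $\overline{ZN_1}=N_2$. Setting $W=YZX$, one checks $WT=YZXT=YZJX=YJZX=TYZX=TW$, so $W$ commutes with $T$; $W$ is injective and has dense range as a composition of quasiaffinities; and, using that $\overline{A\,\overline{S}}=\overline{AS}$ for any bounded operator $A$, we obtain $\overline{WM_1}=\overline{Y\,\overline{Z\,\overline{XM_1}}}=\overline{Y\,\overline{ZN_1}}=\overline{YN_2}=M_2$. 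Hence $W$ witnesses $M_1\prec M_2$.

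The delicate point is the second paragraph: one must make sure the compression data transfers faithfully between $T$ and $J$, that is $T_{M_i^\perp}\sim J_{N_i^\perp}$ rather than merely $J_{N_i^\perp}\prec^i T_{M_i^\perp}$. This is exactly where the hypothesis $YX\in\alg\lat(T)$ enters: without it only $\overline{YN_i}\subset M_i$ is automatic, so the loop $(T,M_i)\sim (J,N_i)$ of quasiaffine transforms of pairs cannot be closed and the reduction to Theorem \ref{mainjordan} breaks down. The remaining steps are routine manipulations of closures together with Theorem \ref{injection}.
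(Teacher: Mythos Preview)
Your proof is correct and follows essentially the same route as the paper: transport $M_1,M_2$ to $N_1,N_2$ via $X$, use Lemma \ref{alglatqa} to close the loop $\overline{YN_i}=M_i$, verify the hypotheses of Theorem \ref{mainjordan} for $J$, and pull the resulting quasiaffinity back as $YZX$. The only cosmetic difference is that you obtain $J_{N_2^\perp}\prec^i J_{N_1^\perp}$ by applying Proposition \ref{easydirection} symmetrically to both pair relations and invoking Theorem \ref{injection}, whereas the paper writes down the explicit injection $Y^*ZX^*|E_2^\perp$; these amount to the same thing.
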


\begin{proof}
One direction follows from Proposition \ref{easydirection}. Assume therefore that $T|M_1\sim T|M_2$ and
$T_{M_2^\perp}\prec^i T_{M_1^\perp} $. Let $E_1=\ol{XM_1}$ and $E_2=\ol{XM_2}$. Notice that $YX$ is a
quasiaffinity commuting with $T$, so that by Lemma \ref{alglatqa} we
have $\ol{YXM_k}=M_k$ for $k=1,2$. In particular, this shows that
$\ol{YE_k}=M_k$ for $k=1,2$. It follows that $J|E_1\sim
T|M_1\sim T|M_2\sim J|E_2$. Moreover, we can write
$X^*E_k^\perp\subset M_k^\perp $ and $Y^* M_k^\perp \subset
E_k^\perp$ for $k=1,2$. Since $T_{M_2^\perp}$ can be injected into $T_{M_1^\perp}$, it
follows from Theorem \ref{injection} that $(T_{M_2^\perp})^*=T^*|M_2^\perp$ can be injected into $(T_{M_1^\perp})^*=T^*|M_1^\perp$,
so we can find an injective operator $Z:M_2^\perp \to M_1^\perp$
such that $Z(T^*|M_2^\perp)=(T^*|M_1^\perp)Z$. Set
$W=Y^*ZX^*|E_2^\perp:E_2^\perp \to E_1^\perp$, which is obviously
injective. Note that for $k=1,2$ we have
$$
(X^*|E_k^\perp) (J^*|E_k^\perp)=X^* J^*|E_k^\perp=T^*
X^*|E_k^\perp=(T^*|M_k^\perp)(X^*|E_k^\perp),
$$
$$
(Y^*|M_k^\perp) (T^*|M_k^\perp)=Y^* T^*|M_k^\perp=J^*
Y^*|M_k^\perp=(J^*|E_k^\perp)(Y^*|M_k^\perp),
$$
whence
\begin{align*}
W(J^*|E_2^\perp)&=Y^*ZX^*J^*|E_2^\perp\\
&=Y^* Z T^* X^* |E_2^\perp\\
&=Y^* Z (T^*|M_2^\perp) X^* |E_2^\perp\\
&=Y^* (T^*|M_1^\perp)Z X^*|E_2^\perp\\
&=(J^* Y^*|M_1^\perp)Z  X^*|E_2^\perp\\
&=(J^*|E_1^\perp)Y^*ZX^*|E_2^\perp\\
&=(J^*|E_1^\perp)W.
\end{align*}
Thus $J^*|E_2^\perp$ can be injected into $J^*|E_1^\perp$,
whence $J_{E_2^\perp}$ can be injected into $J_{E_1^\perp}$ via another application of Theorem \ref{injection}. By
Theorem \ref{mainjordan}, we can find a quasiaffinity $A\in \{J\}'$ such that $\ol{AE_1}=E_2$. Define finally $B=YAX$ which is
clearly another quasiaffinity. We then have
$$BT=YAXT=YAJX=YJAX=TYAX=TB$$
and
$$
\ol{BM_1}=\ol{YAXM_1}=\ol{YAE_1}=\ol{YE_2}=M_2
$$
and the proof is complete.
\end{proof}
In closing, let us mention an instance where the extra assumption $YX\in \alg \lat (T)$ appearing in Theorem \ref{YXalg} is satisfied.

\begin{corollary}\label{cordiag}
Let $T_0\in \B(\hil)$ be a multiplicity-free operator of class $C_0$ and let
$T=\bigoplus_{n=0}^\infty T_0$. Given $M_1$ and $M_2$ two invariant
subspaces for $T$, we have that $M_1\prec M_2$ if and only if
$T|M_1 \sim T|M_2$ and $T_{M_2^\perp} \prec ^i T_{M_1^\perp} $.
\end{corollary}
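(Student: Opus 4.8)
The plan is to apply Theorem \ref{YXalg}. Since $T_0$ is a multiplicity-free operator of class $C_0$, its Jordan model (Theorem \ref{existencejordan}) is again multiplicity-free — the image of a cyclic vector under a quasiaffinity is cyclic — and therefore is a single Jordan block $S(\theta)$ for some inner function $\theta$. Hence there exist quasiaffinities $X_0:\hil\to H(\theta)$ and $Y_0:H(\theta)\to\hil$ with $X_0T_0=S(\theta)X_0$ and $Y_0S(\theta)=T_0Y_0$. I would set $X=\bigoplus_{n=0}^\infty X_0$ and $Y=\bigoplus_{n=0}^\infty Y_0$, regarded as operators between $\bigoplus_{n=0}^\infty\hil$ and $\bigoplus_{n=0}^\infty H(\theta)$, and put $J=\bigoplus_{n=0}^\infty S(\theta)$.

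First I would record the routine facts: $X$ and $Y$ are bounded (all summands share a common norm), injective (a direct sum of injective operators is injective) and have dense range (their ranges contain every finitely supported sequence with entries in $\ran X_0$, resp. $\ran Y_0$, and such sequences are dense); so $X$ and $Y$ are quasiaffinities. A slot-by-slot computation gives $XT=JX$ and $YJ=TY$, so $T$ is quasisimilar to the uniform Jordan operator $J$, which by the uniqueness part of Theorem \ref{existencejordan} is the Jordan model of $T$. Thus every hypothesis of Theorem \ref{YXalg} is in place once we know that $YX\in\alg\lat(T)$.

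Establishing that membership is the step where multiplicity-freeness of $T_0$ is used. We have $YX=\bigoplus_{n=0}^\infty Y_0X_0$, and $Y_0X_0T_0=Y_0S(\theta)X_0=T_0Y_0X_0$, so $Y_0X_0\in\{T_0\}'$. Since $T_0$ is multiplicity-free, Proposition \ref{prop1.10} tells us $\{T_0\}'$ is commutative, so $Y_0X_0$ commutes with every element of $\{T_0\}'$, that is, $Y_0X_0\in\{T_0\}''$. An elementary computation with operator matrices then shows that $\bigoplus_{n=0}^\infty S\in\{\bigoplus_{n=0}^\infty T_0\}''$ for any $S\in\{T_0\}''$: an operator commuting with $T=\bigoplus_n T_0$ has all of its matrix entries in $\{T_0\}'$, and the diagonal operator $\bigoplus_n S$ commutes with such a matrix precisely because $S$ commutes with each entry. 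Hence $YX\in\{T\}''$, and Theorem \ref{alglat} gives $\{T\}''=\alg\lat(T)\cap\{T\}'\subset\alg\lat(T)$, so indeed $YX\in\alg\lat(T)$.

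With the hypotheses of Theorem \ref{YXalg} verified, its conclusion is exactly the assertion of the corollary. The one subtle point is the membership $YX\in\alg\lat(T)$: invariant subspaces for $T=\bigoplus_n T_0$ need not be orthogonal direct sums of invariant subspaces for $T_0$, so one cannot check it coordinatewise; routing the argument through the bicommutant $\{T\}''$ — which is possible exactly because $\{T_0\}'$ is abelian when $T_0$ is multiplicity-free — circumvents that difficulty. (Equivalently, one may use that a multiplicity-free operator of class $C_0$ satisfies $\{T_0\}'=\{u(T_0):u\in H^\infty\}$, whence $Y_0X_0=u(T_0)$ and $YX=u(T)\in\{T\}''$.)
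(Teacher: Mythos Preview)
Your proof is correct and follows essentially the same route as the paper: build the intertwining quasiaffinities as direct sums of the ones for $T_0$, use multiplicity-freeness via Proposition~\ref{prop1.10} to get $Y_0X_0\in\{T_0\}''$, then pass to $\{T\}''$ by the operator-matrix description of $\{T\}'$ and invoke Theorem~\ref{alglat} to land in $\alg\lat(T)$. Your write-up actually supplies a few details the paper leaves implicit (why the direct sums are quasiaffinities, and the explicit reason $\bigoplus_n S\in\{T\}''$), and your closing parenthetical observation that $Y_0X_0=u(T_0)$ for some $u\in H^\infty$ is a nice alternative way to see the same point.
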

\begin{proof}
Denote by $\theta$ the minimal function of $T_0$. By assumption, we
can find  quasiaffinities $X: \hil\to H(\theta)$ and $Y:H(\theta)\to \hil$
with the property that $XT_0=S(\theta)X,$ $YS(\theta)=T_0Y$. Define
$A=\bigoplus_{n=0}^\infty X$ and $B=\bigoplus_{n=0}^\infty Y$, which
are quasiaffinities intertwining $T$ with its Jordan model
$\bigoplus_{n=0}^\infty S(\theta)$. By Theorem \ref{YXalg}, we need
only show that $BA\in \alg \lat (T)$. 

Since $T_0$ is multiplicity-free, Proposition \ref{prop1.10} implies
that $\{T_0\}'$ is commutative, and thus $\{T_0\}''=\{T_0\}'$. Therefore,
$YX\in \{T_0\}''$ and $BA=\bigoplus_{n=0}^\infty YX$ then clearly
belongs to $\{T\}''$ since 
$$
\{T\}'=\left\{(C_{nm})_{n,m=0}^\infty: C_{nm}\in \{T_0\}'\right\}.
$$
By Theorem \ref{alglat}, we find that
$BA\in \alg \lat (T)$ which completes the proof.
\end{proof}

\section{Acknowledgements}
The author was supported by a NSERC PGS grant and would like to thank the referee for a careful and thorough reading of the original version of this paper.

\def\cprime{$'$} \def\cprime{$'$} \def\cprime{$'$} \def\cprime{$'$}
\begin{bibdiv}
\begin{biblist}

\bib{bercovici1988}{book}{
      author={Bercovici, Hari},
       title={Operator theory and arithmetic in {$H^\infty$}},
      series={Mathematical Surveys and Monographs},
   publisher={American Mathematical Society},
     address={Providence, RI},
        date={1988},
      volume={26},
        ISBN={0-8218-1528-8},
      review={\MR{954383 (90e:47001)}},
}

\bib{bercovici1991}{article}{
      author={Bercovici, Hari},
       title={The quasisimilarity orbits of invariant subspaces},
        date={1991},
        ISSN={0022-1236},
     journal={J. Funct. Anal.},
      volume={95},
      number={2},
       pages={344\ndash 363},
         url={http://dx.doi.org/10.1016/0022-1236(91)90033-2},
      review={\MR{1092130 (92e:47017)}},
}

\bib{bercovici1991NP}{article}{
      author={Bercovici, Hari},
      author={Foias, Ciprian},
      author={Tannenbaum, Allen},
       title={On spectral tangential {N}evanlinna-{P}ick interpolation},
        date={1991},
        ISSN={0022-247X},
     journal={J. Math. Anal. Appl.},
      volume={155},
      number={1},
       pages={156\ndash 176},
         url={http://dx.doi.org/10.1016/0022-247X(91)90033-V},
      review={\MR{1089332 (92d:47020)}},
}

\bib{bercovici1991lift}{article}{
      author={Bercovici, Hari},
      author={Foias, Ciprian},
      author={Tannenbaum, Allen},
       title={A spectral commutant lifting theorem},
        date={1991},
        ISSN={0002-9947},
     journal={Trans. Amer. Math. Soc.},
      volume={325},
      number={2},
       pages={741\ndash 763},
         url={http://dx.doi.org/10.2307/2001646},
      review={\MR{1000144 (91j:47006)}},
}

\bib{bercovici1998}{article}{
      author={Bercovici, Hari},
      author={Li, Wing~Suet},
      author={Smotzer, Thomas},
       title={A continuous version of the {L}ittlewood-{R}ichardson rule and
  its application to invariant subspaces},
        date={1998},
        ISSN={0001-8708},
     journal={Adv. Math.},
      volume={134},
      number={2},
       pages={278\ndash 293},
         url={http://dx.doi.org/10.1006/aima.1997.1702},
      review={\MR{1617805 (2000e:47011)}},
}

\bib{bercovici2005}{article}{
      author={Bercovici, Hari},
      author={Li, Wing~Suet},
      author={Smotzer, Thomas},
       title={Continuous versions of the {L}ittlewood-{R}ichardson rule,
  selfadjoint operators, and invariant subspaces},
        date={2005},
        ISSN={0379-4024},
     journal={J. Operator Theory},
      volume={54},
      number={1},
       pages={69\ndash 92},
      review={\MR{2168859 (2006h:47029)}},
}

\bib{bercovici1996}{article}{
      author={Bercovici, Hari},
      author={Smotzer, Thomas},
       title={Quasisimilarity of invariant subspaces for uniform {J}ordan
  operators of infinite multiplicity},
        date={1996},
        ISSN={0022-1236},
     journal={J. Funct. Anal.},
      volume={140},
      number={1},
       pages={87\ndash 99},
         url={http://dx.doi.org/10.1006/jfan.1996.0099},
      review={\MR{1404575 (97h:47004)}},
}

\bib{bercovici1991tann}{article}{
      author={Bercovici, Hari},
      author={Tannenbaum, Allen},
       title={The invariant subspaces of a uniform {J}ordan operator},
        date={1991},
        ISSN={0022-247X},
     journal={J. Math. Anal. Appl.},
      volume={156},
      number={1},
       pages={220\ndash 230},
         url={http://dx.doi.org/10.1016/0022-247X(91)90392-D},
      review={\MR{1102607 (92d:47008)}},
}

\bib{li1998}{article}{
      author={Li, Wing~Suet},
      author={M{\"u}ller, Vladim{\'{\i}}r},
       title={Littlewood-{R}ichardson sequences associated with
  {$C_0$}-operators},
        date={1998},
        ISSN={0001-6969},
     journal={Acta Sci. Math. (Szeged)},
      volume={64},
      number={3-4},
       pages={609\ndash 625},
      review={\MR{1666059 (2000b:47027)}},
}

\bib{li1999}{article}{
      author={Li, Wing~Suet},
      author={M{\"u}ller, Vladim{\'{\i}}r},
       title={Invariant subspaces of nilpotent operators and {LR}-sequences},
        date={1999},
        ISSN={0378-620X},
     journal={Integral Equations Operator Theory},
      volume={34},
      number={2},
       pages={197\ndash 226},
         url={http://dx.doi.org/10.1007/BF01236472},
      review={\MR{1694708 (2001d:47013)}},
}

\bib{nagy2010}{book}{
      author={Sz.-Nagy, B{\'e}la},
      author={Foias, Ciprian},
      author={Bercovici, Hari},
      author={K{\'e}rchy, L{\'a}szl{\'o}},
       title={Harmonic analysis of operators on {H}ilbert space},
     edition={enlarged},
      series={Universitext},
   publisher={Springer},
     address={New York},
        date={2010},
        ISBN={978-1-4419-6093-1},
         url={http://dx.doi.org/10.1007/978-1-4419-6094-8},
      review={\MR{2760647 (2012b:47001)}},
}

\bib{uchiyama1979}{article}{
      author={Uchiyama, Mitsuru},
       title={Quasisimilarity of restricted {$C_{0}$} contractions},
        date={1979},
        ISSN={0001-6969},
     journal={Acta Sci. Math. (Szeged)},
      volume={41},
      number={3-4},
       pages={429\ndash 433},
      review={\MR{555438 (80k:47011)}},
}

\end{biblist}
\end{bibdiv}
\end{document}